\theoremstyle{plain}
\newtheorem{theorem}{Theorem}
\newtheorem{lemma}[theorem]{Lemma}
\newtheorem{proposition}[theorem]{Proposition}
\theoremstyle{definition}
\newtheorem{example}[theorem]{Example}
\newcommand{\LL}{{\mathbf L}}
\newcommand{\M}{\overline{\mathcal M}}
\newcommand{\T}{{\mathcal T}}
\newcommand{\x}{{\mathbf x}}
\begin{document}

\title{The asymptotic {W}eil--{P}etersson form and intersection theory on $\M_{g,n}$}
\author{Norman Do}
\address{Department of Mathematics and Statistics, McGill University, Canada}
\email{normdo@gmail.com}
\subjclass[2010]{32G15; 53D30; 14H10}
\date{\today}
\begin{abstract}
Moduli spaces of hyperbolic surfaces with geodesic boundary components of fixed lengths may be endowed with a symplectic structure via the Weil--Petersson form. We show that, as the boundary lengths are sent to infinity, the Weil--Petersson form converges to a piecewise linear form first defined by Kontsevich. The proof rests on the observation that a hyperbolic surface with large boundary lengths resembles a graph after appropriately scaling the hyperbolic metric. We also include some applications to intersection theory on moduli spaces of curves.
\end{abstract}

\maketitle

\section{Introduction} \label{introduction}

Let ${\mathcal M}_{g,n}({\mathbf L})$ denote the moduli space of hyperbolic surfaces\footnote{We decree that all surfaces referred to in this paper are to be connected and oriented. In addition, we decree that all algebraic curves referred to in this paper are to be complex, connected and complete.} with genus $g$ and $n$ labelled geodesic boundary components whose lengths are prescribed by $\LL = (L_1, L_2, \ldots, L_n)$. This moduli space may be endowed with a natural symplectic structure via the Weil--Petersson form $\omega$. In this paper, we explore the asymptotic behaviour of the symplectic structure on ${\mathcal M}_{g,n}(N{\mathbf x})$ as $N$ approaches infinity, for a fixed $\x = (x_1, x_2, \ldots, x_n)$. Underlying much of our work is the observation that a hyperbolic surface with large boundary lengths resembles a graph after appropriately scaling the hyperbolic metric. In particular, one is naturally led to consider the combinatorial structure known in the literature as a ribbon graph.

A ribbon graph of type $(g,n)$ is essentially the 1-skeleton of a cell decomposition of a genus $g$ surface which has $n$ faces. We require the vertices to have degree at least three and the faces to be labelled from 1 up to $n$. A ribbon graph with a positive real number assigned to each edge is referred to as a metric ribbon graph. The metric associates to each face a perimeter, which is simply the sum of the numbers appearing around the boundary of the face.

Let ${\mathcal MRG}_{g,n}(\x)$ denote the moduli space of metric ribbon graphs of type $(g,n)$ whose perimeters are prescribed by $\x = (x_1, x_2, \ldots, x_n)$. This is naturally an orbifold and is sometimes known as the combinatorial moduli space. Its importance lies in the fact that it is homeomorphic to ${\mathcal M}_{g,n}(\x)$ via a construction due to Bowditch and Epstein \cite{bow-eps}. In one direction, this construction associates to a hyperbolic surface with boundary its spine --- in other words, the set of points which have at least two equal shortest paths to the boundary. The spine is topologically an embedded graph which can be interpreted as a metric ribbon graph. The inverse of this construction produces a hyperbolic surface $S(\Gamma) \in {\mathcal M}_{g,n}(\x)$ for every metric ribbon graph $\Gamma \in {\mathcal MRG}_{g,n}(\x)$. Given a metric space $X$, let $\lambda X$ denote the same underlying set with the metric scaled by a positive real number $\lambda$. In this paper, we prove the following result, which formalises our earlier observation concerning hyperbolic surfaces with large boundary lengths.

\begin{theorem} \label{gromov-hausdorff}
In the Gromov--Hausdorff topology, the following equation holds for every metric ribbon graph $\Gamma$.
\[
\lim_{N \to \infty} \frac{1}{N} S(N\Gamma)= \Gamma
\]
\end{theorem}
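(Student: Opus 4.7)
The plan is to verify Gromov--Hausdorff convergence by exhibiting an $\varepsilon_N$-correspondence between $S_N := \frac{1}{N} S(N\Gamma)$ and $\Gamma$ with $\varepsilon_N \to 0$. The Bowditch--Epstein construction provides a canonical piecewise-geodesic embedding $j_N : N\Gamma \hookrightarrow S(N\Gamma)$ of the spine, which rescales to an embedding $j : \Gamma \hookrightarrow S_N$ sending each edge of $\Gamma$ to a geodesic arc of the same length. The natural correspondence pairs each point of $S_N$ with its nearest point on $j(\Gamma)$, and its distortion is governed by two estimates: an $\varepsilon_N$-density statement and an approximate isometry statement.

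For the density, I would exploit the explicit local structure of $S(N\Gamma)$. Each edge of the spine has a neighbourhood that is a hyperbolic quadrilateral bounded by the edge, two perpendicular ``ribs'' meeting the boundary at right angles, and an arc of the boundary geodesic; each vertex of degree $d$ corresponds to a junction where $d$ such quadrilaterals meet. Standard hyperbolic trigonometry implies that, when the perimeters are scaled by $N$, the perpendicular distance from the spine to the boundary grows like $\log N$, and each vertex region has diameter $O(\log N)$. Dividing by $N$, every point of $S_N$ lies within $O\bigl(\tfrac{\log N}{N}\bigr) \to 0$ of $j(\Gamma)$.

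For the metric distortion, the inequality $d_{S_N}(j(p), j(q)) \leq d_\Gamma(p,q)$ is immediate, since a shortest path in $\Gamma$ traces out a piecewise-geodesic path of the same length in $S_N$. For the reverse inequality, I would take a minimising surface path $\gamma$ from $j(p)$ to $j(q)$, partition $S_N$ into the strip neighbourhoods of the spine edges, cut $\gamma$ at its crossings of strip interfaces, and project each resulting piece onto the underlying spine edge via nearest-point projection. Since nearest-point projection onto a hyperbolic geodesic is $1$-Lipschitz, each strip segment yields a spine piece no longer than itself; at each interface crossing, the projection may jump, and I add a connecting spine segment of length at most the interface diameter, $O(\log N / N)$ after rescaling.

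The principal obstacle is bounding the number of interface crossings on the minimising path. Each strip has diameter at least $\min_e \ell_e$ (a positive constant depending only on $\Gamma$), so a minimising path of length $L$ crosses at most $O(1 + L)$ interfaces, contributing a total error $O\bigl((1+L)\tfrac{\log N}{N}\bigr)$. Compactness of $\Gamma$ bounds all pairwise distances, so the error vanishes uniformly in $p,q$ as $N \to \infty$. Combining the density and distortion estimates yields the required $\varepsilon_N$-correspondence, establishing the theorem.
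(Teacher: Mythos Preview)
Your approach is essentially the paper's: embed $\Gamma$ as the spine of $S_N$, retract $S_N$ onto the spine, and control both the density and the metric distortion by $(\text{rib length})/N$ together with a bound on the number of edge-strips a minimising path can cross. Two of your quantitative assertions are inaccurate, though neither is fatal.

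First, the rib lengths do not grow like $\log N$. The paper shows (via the intercostal lemma and the fact that each vertex polygon degenerates to a regular ideal $d$-gon) that the rib at a degree-$d$ vertex converges to the finite value $\cosh^{-1}\!\bigl(1/\sin(\pi/d)\bigr)$. Your $O(\log N)$ is a harmless overestimate, but ``standard hyperbolic trigonometry'' will not hand you $\log N$; if you actually carry out the trirectangle computation you get $O(1)$. Likewise your ``vertex region has diameter $O(\log N)$'' is not correct for the vertex polygon (its diameter is of order $N$), though what you actually need---that the rib interfaces have length $O(1)$---is true.

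Second, the spine inclusion is \emph{not} an isometry onto its image: in the Bowditch--Epstein construction the ribbon-graph edge length is defined to be the length of the adjacent boundary arc, not the hyperbolic length of the spine geodesic itself. A trirectangle computation gives $\sinh(\text{half-edge}) = \sinh(\text{half-boundary})\cosh(\text{rib})$, so the spine edge over an edge of graph-length $\ell$ has hyperbolic length $N\ell + O(1)$. Hence your claimed inequality $d_{S_N}(j(p),j(q)) \le d_\Gamma(p,q)$ holds only up to an $O(E/N)$ correction, where $E$ is the number of edges traversed. With these two adjustments your argument goes through and matches the paper's proof.
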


In order to study the asymptotic behaviour of the Weil--Petersson form, fix $\x$ and consider the map
\[
f: \mathcal{MRG}_{g,n}(\x) \to \mathcal{MRG}_{g,n}(N\x) \to {\mathcal M}_{g,n}(N\x).
\]
This homeomorphism of orbifolds is the composition of two maps --- the first scales the ribbon graph metric by $N$ while the second uses the Bowditch--Epstein construction. The normalised Weil--Petersson form $\frac{\omega}{N^2}$ on ${\mathcal M}_{g,n}(N\x)$ pulls back via $f$ to a symplectic form on the combinatorial moduli space. By analysing the hyperbolic geometry of surfaces with large boundary lengths, we obtain the following result.

\begin{theorem} \label{equal-forms}
In the $N \to \infty$ limit, the symplectic form $\frac{f^*\omega}{N^2}$ converges pointwise to a piecewise linear 2-form $\Omega_L$ on the locus of trivalent metric ribbon graphs in ${\mathcal MRG}_{g,n}(\x)$. Furthermore, this coincides with the piecewise linear 2-form $\Omega_K$ introduced by Kontsevich in his proof of the Witten--Kontsevich theorem \cite{kon}.
\end{theorem}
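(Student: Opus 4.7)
The plan is to compute both forms in a common local coordinate system at a fixed trivalent ribbon graph $\Gamma \in \mathcal{MRG}_{g,n}(\x)$, namely the edge-length coordinates $(\ell_e)_{e \in E(\Gamma)}$ subject to the $n$ linear perimeter constraints. In these coordinates, Kontsevich's 2-form has a purely combinatorial, face-local expression: for each face $F$ with cyclically ordered edges of lengths $\ell_1^{(F)}, \ldots, \ell_{k_F}^{(F)}$ and partial sums $s_a^{(F)} = \ell_1^{(F)} + \cdots + \ell_a^{(F)}$, one has $\Omega_K = \sum_F \sum_{1 \leq a < b \leq k_F - 1} ds_a^{(F)} \wedge ds_b^{(F)}$. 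The goal is to match this with the asymptotic expansion of $f^*\omega / N^2$.

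To access the Weil--Petersson form, I would apply Wolpert's formula $\omega = \sum_\alpha d\mathcal{L}_\alpha \wedge d\tau_\alpha$ relative to a pants decomposition $\{\alpha\}$ canonically associated to the spine: each pair of pants surrounds one vertex of the spine, so the curves $\alpha$ are dual to individual spine edges. The problem then reduces to computing the $N \to \infty$ asymptotics of $\mathcal{L}_\alpha$ and $\tau_\alpha$, viewed as functions of the ribbon-graph edge lengths via the Bowditch--Epstein homeomorphism $f$.

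The key geometric input is the hyperbolic trigonometry of the right-angled hexagons supported at the spine vertices: each trivalent vertex sits at the centre of a right-angled hexagon whose three spine-side edges are the incident half-edges of $\Gamma$ and whose three boundary-side edges are arcs of $\partial S(N\Gamma)$. Theorem~\ref{gromov-hausdorff} already provides leading-order control, ensuring that the spine edge of $S(N\Gamma)$ corresponding to $e$ has hyperbolic length $N\ell_e + o(N)$. A further hexagonal calculation, exploiting the fact that each hexagon collapses onto the spine as $N \to \infty$, should yield expansions $\mathcal{L}_\alpha = N\, L_\alpha(\boldsymbol{\ell}) + o(N)$ and $\tau_\alpha = N\, T_\alpha(\boldsymbol{\ell}) + o(N)$, with $L_\alpha$ and $T_\alpha$ linear combinations of the $\ell_e$ whose coefficients encode the cyclic ordering of spine edges around the two adjacent faces. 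Substituting these into Wolpert's formula and dividing by $N^2$ shows that $f^*\omega/N^2$ converges pointwise to the piecewise linear 2-form $\Omega_L := \sum_\alpha dL_\alpha \wedge dT_\alpha$.

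The principal obstacle will be controlling the twist parameters: lengths are canonically defined and relatively straightforward to expand, but twists require a careful choice of reference arcs at each cuff, and their sign depends on a consistent convention aligned with the cyclic ordering around the faces of $\Gamma$. Once these asymptotics are established, the final step is algebraic: expanding $\sum_\alpha dL_\alpha \wedge dT_\alpha$ as a linear combination of $d\ell_e \wedge d\ell_{e'}$ and regrouping the terms by face, I expect the sum to reorganise exactly into Kontsevich's formula, yielding $\Omega_L = \Omega_K$. Because both forms are additive over faces, this identification can be checked face-by-face and reduces to a direct polygon computation that is essentially independent of the global topology of $\Gamma$.
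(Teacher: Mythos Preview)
Your approach is genuinely different from the paper's, and it contains a concrete error together with an unfilled gap.

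First, the error. The pants decomposition you describe does not exist: a trivalent ribbon graph of type $(g,n)$ has $4g-4+2n$ vertices and $6g-6+3n$ edges, whereas any pants decomposition of the surface has only $2g-2+n$ pairs of pants and $3g-3+n$ interior curves. So one cannot have ``each pair of pants surrounding one vertex'' nor ``curves $\alpha$ dual to individual spine edges'' in any literal sense; the counts are off by a factor of two. You would need to choose some pants decomposition not canonically tied to the combinatorics of $\Gamma$, which undermines the face-by-face matching you propose at the end.

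Second, the gap. You correctly identify the twist asymptotics as the principal obstacle, but offer no mechanism to show $\tau_\alpha = N\,T_\alpha(\boldsymbol{\ell}) + o(N)$ with $T_\alpha$ \emph{linear} in the edge lengths, nor to control the derivatives well enough to pass to the limit under $d$. This is exactly where the difficulty lies, and the paper sidesteps it entirely. Instead of Fenchel--Nielsen coordinates, the paper uses Wolpert's \emph{cosine formula} (Proposition~\ref{wolpert}), which expresses $\omega$ purely in terms of the lengths $\ell_1,\dots,\ell_{6g-6+2n}$ of a system of simple multicurves, with coefficients given by the inverse of the angle matrix $X_{ij}=\sum_{p\in C_i\cap C_j}\cos\theta_p$. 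The paper associates to each edge $k$ an explicit multicurve $\widetilde{C}_k$ on $\Gamma$, uses Lemma~\ref{length-lemma} to get $d\widehat{\ell}_k\to -2\,de_k$, and uses Lemma~\ref{angle-lemma} (all intersection angles tend to $0$ or $\pi$ in the trivalent case) to show $X\to -2B$, where $B$ is the oriented edge-adjacency matrix of $\Gamma$. After checking $\mathrm{rank}\,B=6g-6+2n$ (Lemma~\ref{matrix-rank}), the limit form is $2\sum_{i<j}[\widehat{B}^{-1}]_{ij}\,de_i\wedge de_j$, and the identification with $\Omega_K$ is made by matching the contraction relations $\iota_{T_i}\Omega_K=-2\,de_i$, $\Omega_K(T_i,T_j)=-2B_{ij}$ against Wolpert's $\iota_{\tau_i}\omega=-d\ell_i$, $\omega(\tau_i,\tau_j)=X_{ij}$. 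The upshot: lengths and angles are tractable in the $N\to\infty$ limit, twists are not, and the paper's choice of Wolpert formula is precisely what makes the argument go through.
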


Using a result of Mirzakhani \cite{mir2} which relates the Weil--Petersson form on ${\mathcal M}_{g,n}(\LL)$ to characteristic classes on the moduli space of curves $\M_{g,n}$ in conjunction with Theorem~\ref{equal-forms} allows us to give a new proof of the following identity.

\begin{theorem}[Kontsevich's combinatorial formula] \label{kcf} For the moduli space of curves $\M_{g,n}$, we have the following equality of rational polynomials in $s_1, s_2, \ldots, s_n$.
\[
\sum_{|\text{\boldmath$\alpha$}| = 3g-3+n} \int_{\M_{g,n}} \psi_1^{\alpha_1} \psi_2^{\alpha_2} \cdots \psi_n^{\alpha_n} ~ \prod_{k=1}^n \frac{(2\alpha_k - 1)!!}{s_k^{2\alpha_k+1}} = \sum_{\Gamma} \frac{2^{2g-2+n}}{|\text{Aut}(\Gamma)|} \prod_{e \in E(\Gamma)} \frac{1}{s_{\ell(e)} + s_{r(e)}}
\]
Here, we use the notation $|\text{\boldmath$\alpha$}|$ as a shorthand for $\alpha_1 + \alpha_2 + \cdots + \alpha_n$. The sum on the right hand side is over the trivalent ribbon graphs of type $(g,n)$. We write $\text{Aut}(\Gamma)$ and $E(\Gamma)$ for the automorphism group and edge set of $\Gamma$, respectively. For an edge $e$, the expressions $\ell(e)$ and $r(e)$ denote the labels of the faces on its left and right.\footnote{Although the left and right of an edge are not well-defined, the expression $s_{\ell(e)} + s_{r(e)}$ certainly is.}
\end{theorem}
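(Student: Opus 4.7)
The plan is to equate two polynomial expressions for a normalised Weil--Petersson volume of $\mathcal{M}_{g,n}(N\x)$ and then apply a Laplace transform in $\x$ to recover Theorem \ref{kcf}.

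First I would invoke Mirzakhani's theorem \cite{mir2}, which states that the class of the Weil--Petersson form extends to $\M_{g,n}$ as $[\omega_\LL] = 2\pi^2 \kappa_1 + \tfrac12 \sum L_i^2 \psi_i$. Setting $\LL = N\x$, dividing the symplectic volume by $N^{2d}$ with $d = 3g-3+n$, and letting $N \to \infty$ suppresses the $\kappa_1$ contribution and yields
\[
\lim_{N \to \infty} \int_{\mathcal{M}_{g,n}(N\x)} \frac{(\omega/N^2)^d}{d!} = \sum_{|\text{\boldmath$\alpha$}|=d} \left(\prod_{i=1}^n \frac{x_i^{2\alpha_i}}{2^{\alpha_i}\alpha_i!}\right) \int_{\M_{g,n}} \psi_1^{\alpha_1} \cdots \psi_n^{\alpha_n}.
\]
On the other hand, because $f$ is a homeomorphism, the left hand side equals the limit of $\int_{{\mathcal MRG}_{g,n}(\x)}(f^*\omega/N^2)^d/d!$. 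Theorem \ref{equal-forms} and (modulo the issue below) passing the limit inside the integral identify this with the Kontsevich combinatorial volume $V_{g,n}(\x) := \int_{{\mathcal MRG}_{g,n}(\x)} \Omega_K^d/d!$. The two expressions for $V_{g,n}(\x)$ then form a polynomial identity in $\x$.

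Second, I would take the Laplace transform $\prod_i \int_0^\infty e^{-s_i x_i}\,dx_i$ of both sides. On the psi-class side, the elementary identity $\int_0^\infty e^{-sx} x^{2\alpha}\,dx = (2\alpha)!/s^{2\alpha+1}$ together with $(2\alpha)! = 2^\alpha \alpha!\,(2\alpha-1)!!$ immediately produces the left hand side of Theorem \ref{kcf}. On the combinatorial side I would invoke Kontsevich's original edge-by-edge calculation \cite{kon}: the top cells of ${\mathcal MRG}_{g,n}$ are indexed by trivalent ribbon graphs $\Gamma$ carrying edge-length coordinates, and on each such cell the transformed integrand factorises because the perimeter sum $\sum_j s_j x_j$ rewrites as $\sum_e (s_{\ell(e)}+s_{r(e)})e$, so each edge contributes $\int_0^\infty e^{-(s_{\ell(e)}+s_{r(e)})t}\,dt = 1/(s_{\ell(e)}+s_{r(e)})$; the remaining combinatorial factors of $2^{2g-2+n}$ and $1/|\text{Aut}(\Gamma)|$ arise from Kontsevich's explicit expression for $\Omega_K^d/d!$ in these coordinates and from the orbifold quotient, respectively, delivering the right hand side of Theorem \ref{kcf}.

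The main obstacle is the interchange of limit and integral in the first step: Theorem \ref{equal-forms} only asserts pointwise convergence of forms on the non-compact orbifold ${\mathcal MRG}_{g,n}(\x)$. However, Mirzakhani's formula already presents the pre-limit integral as a polynomial in $\x$ of degree $2d$ whose coefficients are rational in $1/N$, so the matter reduces to convergence of finitely many coefficients; combined with the fact that the non-trivalent locus has positive codimension, this should suffice to justify the exchange.
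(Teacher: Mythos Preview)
Your proposal is correct and follows essentially the same route as the paper: use Mirzakhani's formula (Proposition~\ref{mirzakhani}) for one expression of $\lim_{N\to\infty} V_{g,n}(N\x)/N^{6g-6+2n}$, use Theorem~\ref{equal-forms} to identify the same limit with the Kontsevich volume $\int_{\mathcal{MRG}_{g,n}(\x)}\Omega_K^d/d!$, then Laplace transform and quote Kontsevich's cell-by-cell computation for the ribbon-graph side. The only notable difference is the justification of the limit--integral interchange: the paper invokes dominated convergence directly, whereas you observe that Mirzakhani's formula already exhibits the pre-limit volume as a polynomial in $\x$ with finitely many $N$-dependent coefficients, reducing the question to convergence of those coefficients --- this is a perfectly good (and arguably tidier) alternative.
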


This is the main identity used by Kontsevich in his proof of the Witten--Kontsevich theorem \cite{kon}. Our proof of this result highlights one of the goals of this paper --- namely, to bring together the combinatorial methods pioneered by Kontsevich and the hyperbolic geometry used by Mirzakhani into a coherent narrative.

As a final application, we provide a method for computing intersection numbers of the form
\[
\int_{\M_{g,n}} W \psi_1^{\alpha_1} \psi_2^{\alpha_2} \cdots \psi_n^{\alpha_n},
\]
where $W \in H^*(\M_{g,n}; \mathbb{Q})$ is the Poincar\'{e} dual to a combinatorial cycle. Combinatorial cycles, first defined by Kontsevich \cite{kon}, are represented by closures of subsets of the combinatorial moduli space corresponding to metric ribbon graphs with a specified degree sequence.

The structure of the paper is as follows.
\begin{itemize}
\item In Section~\ref{background}, we discuss the relevant background material. This includes a brief treatment of moduli spaces of hyperbolic surfaces, the combinatorial moduli space, and intersection theory on moduli spaces of curves.

\item In Section~\ref{long-boundaries}, we prove several lemmas concerning hyperbolic surfaces with large boundary lengths, culminating in the proof of Theorem~\ref{gromov-hausdorff}.

\item In Section~\ref{asymptotic-form}, we consider the asymptotic Weil--Petersson form and give the proof of Theorem~\ref{equal-forms}.

\item In Section~\ref{applications}, we provide applications of the asymptotic Weil--Petersson form to intersection theory on moduli spaces of curves. In particular, we prove Theorem~\ref{kcf} and conclude with a discussion of the method for computing intersection numbers which involve combinatorial cycles.
\end{itemize}

\section{Background} \label{background}

\subsection{Moduli spaces of hyperbolic surfaces}

For a tuple $\LL = (L_1, L_2, \ldots, L_n)$ of positive real numbers, we consider the following moduli space.
\[
{\mathcal M}_{g,n}(\LL) = \left. \left\{ (S, \beta_1, \beta_2, \ldots, \beta_n) \; \left| \begin{array}{l} S \text{ is a hyperbolic surface with genus } g \text{ and geodesic boundary} \\ \text{components } \beta_1, \beta_2, \ldots, \beta_n \text{ of lengths } L_1, L_2, \ldots, L_n \end{array} \right. \right\} \right / \sim
\]
Here, $(S, \beta_1, \beta_2, \ldots, \beta_n) \sim (T, \gamma_1, \gamma_2, \ldots, \gamma_n)$ if and only if there exists an isometry from $S$ to $T$ which sends $\beta_k$ to $\gamma_k$ for all $k$. One may extend this definition to the case $L_k = 0$ by allowing $\beta_k$ to be a hyperbolic cusp.

The methods of Teichm\"{u}ller theory enable us to construct the moduli space ${\mathcal M}_{g,n}(\LL)$ as an orbifold and to endow it with a natural symplectic structure. We start by fixing a smooth surface $\Sigma_{g,n}$ with genus $g$ and $n$ boundary components labelled from 1 up to $n$, where the Euler characteristic $\chi(\Sigma_{g,n}) = 2-2g-n$ is negative. Now define a marked hyperbolic surface to be a pair $(S, f)$ where $S$ is a hyperbolic surface and $f: \Sigma_{g,n} \to S$ is a diffeomorphism. We call $f$ the marking and define Teichm\"{u}ller space as follows.
\[
{\mathcal T}_{g,n}(\LL) = \left. \left\{ (S, f) \; \left| \begin{array}{l} (S, f) \text{ is a marked hyperbolic surface with genus } g \text{ and } \\ \text{geodesic boundary components of lengths } L_1, L_2, \ldots, L_n \end{array} \right. \right\} \right / \sim
\]
Here, $(S, f) \sim (T, g)$ if and only if there exists an isometry $\phi: S \to T$ such that $\phi \circ f$ is isotopic to $g$. In essence, Teichm\"{u}ller space is the space of all deformations of the hyperbolic structure on a surface.

We now define global coordinates on Teichm\"{u}ller space, known as Fenchel--Nielsen coordinates. Start by considering a pair of pants decomposition of the surface $\Sigma_{g,n}$ --- in other words, a collection of non-intersecting simple closed curves whose complement is a disjoint union of surfaces with genus 0 and 3 boundary components. For topological reasons, every pair of pants decomposition of $\Sigma_{g,n}$ must consist of precisely $3g-3+n$ curves. The marking $f: \Sigma_{g,n} \to S$ maps a pair of pants decomposition to a collection of simple closed curves, each of which has a unique geodesic representative in its homotopy class. Denote these simple closed geodesics by $\gamma_1, \gamma_2, \ldots, \gamma_{3g-3+n}$ and let their lengths be $\ell_1, \ell_2, \ldots, \ell_{3g-3+n}$, respectively. Cutting $S$ along $\gamma_1, \gamma_2, \ldots, \gamma_{3g-3+n}$ leaves a disjoint union of hyperbolic pairs of pants. Due to the following basic result, the length parameters $\ell_1, \ell_2, \ldots, \ell_{3g-3+n}$ provide sufficient information to reconstruct the hyperbolic structure on each pair of pants in the decomposition.

\begin{proposition} \label{pair-of-pants}
Given any three lengths, there exists a unique hyperbolic pair of pants up to isometry with geodesic boundary components of those lengths. We refer to the three simple geodesic arcs perpendicular to the boundary components and connecting them pairwise as the seams. Every hyperbolic pair of pants can be decomposed into two congruent right-angled hexagons by cutting along the seams.
\end{proposition}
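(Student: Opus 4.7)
The plan is to reduce everything to the corresponding statement for right-angled hyperbolic hexagons and then recover the pair of pants by doubling. I would first establish the following hexagon lemma: given any triple of positive reals $(a,b,c)$, there is a right-angled hyperbolic hexagon, unique up to isometry, whose three alternating sides have lengths $a,b,c$. Existence follows from a direct construction in $\mathbb{H}^2$ — start with a geodesic segment of length $a$, erect perpendicular geodesics at its endpoints, truncate them at appropriate lengths governed by one real parameter, and close up using a final perpendicular common to the two resulting geodesics; a continuity and monotonicity argument in the parameter shows that the alternating sides can be made to take any prescribed positive values. Uniqueness then follows from the hyperbolic law of cosines for right-angled hexagons, which expresses the three remaining side lengths as explicit functions of $a$, $b$, $c$.

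Next I would construct the pair of pants. Given boundary lengths $L_1,L_2,L_3$, let $H$ be a right-angled hexagon with alternating sides of lengths $\tfrac{L_1}{2}, \tfrac{L_2}{2}, \tfrac{L_3}{2}$, and label the three remaining sides $\sigma_1,\sigma_2,\sigma_3$. Take a mirror copy $H'$ of $H$ and glue $H$ to $H'$ via the obvious isometry along each $\sigma_i$. Since all hexagon angles are right angles, the angles around each resulting interior vertex sum to $2\pi$, so the hyperbolic structure extends smoothly across the glued edges. The result is a hyperbolic pair of pants whose boundary components are closed geodesics of lengths $L_1, L_2, L_3$, and in which the images of the $\sigma_i$ are the desired seams.

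For uniqueness, I would argue that any hyperbolic pair of pants $P$ with boundary lengths $L_1, L_2, L_3$ carries three canonical seams. Between any two distinct boundary geodesics $\beta_i, \beta_j$, the infimum of distances is realised by a geodesic arc orthogonal to both, and this arc is unique: lifting to the universal cover $\widetilde{P}\subset\mathbb{H}^2$, the components $\beta_i$ and $\beta_j$ lift to two disjoint, non-asymptotic geodesics of $\mathbb{H}^2$, and any such pair admits a unique common perpendicular. Together with a simplicity check (the three arcs are pairwise disjoint), cutting $P$ along the three seams produces two hyperbolic polygons, each having six sides which meet at right angles, with alternating sides of lengths $\tfrac{L_1}{2}, \tfrac{L_2}{2}, \tfrac{L_3}{2}$ coming from the half-boundaries. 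By the hexagon lemma both pieces are isometric to $H$, hence $P$ is isometric to the pair of pants constructed above.

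The main obstacle will be establishing the seams — that is, the existence, uniqueness, and orthogonality of the shortest arc between each pair of boundary components, and the fact that these arcs are disjoint. One can attempt a compactness argument directly on $P$, but this is slightly delicate because $P$ has geodesic boundary and the minimising sequence could in principle escape. Passing to the universal cover and invoking the standard fact that two disjoint non-asymptotic geodesics in $\mathbb{H}^2$ have a unique common perpendicular, whose length equals their distance, sidesteps these issues and is the cleanest route.
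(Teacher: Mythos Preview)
The paper does not prove this proposition: it is stated without proof in the background section as a standard fact of hyperbolic geometry (and indeed the paper later cites Buser for such material). So there is no proof in the paper to compare against.

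Your outline is the standard textbook argument and is sound in structure. Two points deserve tightening. First, in the uniqueness step you assert that cutting along the three seams yields two hexagons ``with alternating sides of lengths $\tfrac{L_1}{2}, \tfrac{L_2}{2}, \tfrac{L_3}{2}$ coming from the half-boundaries,'' but nothing you have said yet forces the seams to \emph{bisect} each boundary. The fix is to note that the two resulting hexagons share the same three seam-lengths as one triple of alternating sides; the hexagon lemma applied to \emph{that} triple shows the two hexagons are congruent, whence the remaining alternating sides (the boundary arcs) match pairwise and each $\beta_k$ is bisected. Second, the sentence ``$\beta_i$ and $\beta_j$ lift to two disjoint, non-asymptotic geodesics of $\mathbb{H}^2$'' is loose: each boundary component has infinitely many lifts in $\widetilde P$. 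What you want is that a given homotopy class of arc from $\beta_i$ to $\beta_j$ picks out a specific pair of lifts, and then you also need the topological fact that a pair of pants has a unique isotopy class of simple arc joining each pair of boundary components. With these two clarifications the argument goes through.
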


On the other hand, the lengths $\ell_1, \ell_2, \ldots, \ell_{3g-3+n}$ are not sufficient to reconstruct the hyperbolic structure on all of $S$, since there are infinitely many ways to glue together the pairs of pants. This extra gluing information is stored in the twist parameters, which we denote by $\tau_1, \tau_2, \ldots, \tau_{3g-3+n}$. To construct them, fix a collection $C$ of disjoint simple curves on $\Sigma_{g,n}$ which are either closed or have endpoints on the boundary. We require that $C$ meets the curves $\gamma_1, \gamma_2, \ldots, \gamma_{3g-3+n}$ transversely, such that its restriction to any particular pair of pants consists of three disjoint arcs which connect the boundary components pairwise. Now to construct the twist parameter $\tau_k$, note that there could be either one or two curves $\gamma \in C$ such that $f(\gamma)$ meets $\gamma_k$, where $f$ is the marking. Homotopic to $f(\gamma)$, relative to the boundary of $X$, is a unique length-minimising piecewise geodesic curve which is entirely contained in the seams of the hyperbolic pairs of pants and the curves $\gamma_1, \gamma_2, \ldots, \gamma_{3g-3+n}$. The twist parameter $\tau_k$ is the signed hyperbolic distance that this curve travels along $\gamma_k$ and is well-defined due to Proposition~\ref{pair-of-pants}.

\begin{proposition} \label{fenchel-nielsen}
The map which assigns to a marked hyperbolic surface its Fenchel--Nielsen coordinates --- in other words, its length and twist parameters --- is a homeomorphism.
\[
{\mathcal T}_{g,n}(\LL) \cong \mathbb{R}_+^{3g-3+n} \times \mathbb{R}^{3g-3+n}
\]
\end{proposition}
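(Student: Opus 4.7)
The plan is to exhibit the map and its inverse explicitly and argue that both are continuous. Fix once and for all a pair of pants decomposition $\{\gamma_1,\ldots,\gamma_{3g-3+n}\}$ of $\Sigma_{g,n}$ together with the auxiliary system of arcs $C$ used in the definition of the twists. Given a marked hyperbolic surface $(S,f)$, each $f(\gamma_k)$ is freely homotopic to a unique simple closed geodesic, whose length $\ell_k$ is well-defined on the isotopy class of markings, and the twist $\tau_k$ is well-defined by Proposition~\ref{pair-of-pants}. This gives a well-defined map $\mathrm{FN}\colon \mathcal{T}_{g,n}(\LL)\to \mathbb{R}_+^{3g-3+n}\times \mathbb{R}^{3g-3+n}$.

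For surjectivity, given $(\ell_1,\ldots,\ell_{3g-3+n},\tau_1,\ldots,\tau_{3g-3+n})$ I would build a candidate marked surface by first producing, for each pair of pants $P$ in the decomposition, the unique hyperbolic pair of pants whose boundary lengths match the prescribed $\ell_k$'s (using Proposition~\ref{pair-of-pants} together with the boundary lengths $L_1,\ldots,L_n$), and then gluing along each $\gamma_k$ so that the images of the seams meeting $\gamma_k$ on the two sides are offset by signed arc-length $\tau_k$ relative to some reference alignment of the seams. The combinatorial pattern of the gluing, inherited from $\Sigma_{g,n}$, also furnishes the diffeomorphism $f$, so we get a marked surface whose Fenchel--Nielsen coordinates are the prescribed tuple.

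For injectivity, suppose $(S,f)$ and $(T,h)$ have identical Fenchel--Nielsen coordinates. Since the length parameters agree, Proposition~\ref{pair-of-pants} supplies, for each piece $P$ of the decomposition, an isometry between the corresponding hyperbolic pairs of pants in $S$ and $T$ sending seams to seams. The fact that the twist parameters also agree means these piecewise isometries may be chosen compatibly along the gluing geodesics, and so glue to a global isometry $\phi\colon S\to T$. By construction $\phi\circ f$ and $h$ agree up to a Dehn-twist-type ambiguity on the pair-of-pants pieces; the usual argument (isotoping inside each pair of pants, which is a deformation retract onto its spine) shows $\phi\circ f$ is isotopic to $h$, so $(S,f)\sim(T,h)$.

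Continuity of $\mathrm{FN}$ follows because geodesic lengths and seam positions vary continuously under $C^\infty$ deformation of the hyperbolic metric, which is the topology on $\mathcal{T}_{g,n}(\LL)$. Continuity of the inverse is the point that will need the most care: one must verify that the hyperbolic hexagons assembling each pair of pants depend smoothly on their side lengths (a standard computation from the hyperbolic law of cosines applied to right-angled hexagons), and that the twisted gluing depends smoothly on $\tau_k$. The main technical obstacle is this last continuity verification, together with the bookkeeping needed when a single curve $\gamma_k$ bounds the same pair of pants on both sides, since then one must ensure the gluing recipe and the twist convention remain consistent. Once these are checked, $\mathrm{FN}$ is a continuous bijection between manifolds of the same dimension with continuous inverse, hence a homeomorphism.
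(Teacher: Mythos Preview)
The paper does not prove Proposition~\ref{fenchel-nielsen}; it is stated as classical background without proof, so there is no argument to compare against. Your sketch is the standard one (build the surface pair-of-pants by pair-of-pants from the lengths, glue with the prescribed twists, and check continuity via hyperbolic trigonometry for right-angled hexagons), and it would serve as a reasonable outline of the textbook proof.

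One small remark: in your injectivity step you mention a ``Dehn-twist-type ambiguity'' that must be resolved by an isotopy argument. Since the twist parameters here are real numbers rather than taken modulo $\ell_k$, the agreement of twists already pins down the gluing up to isotopy and no residual Dehn-twist discrepancy arises; the marking is recovered unambiguously. The genuine work, as you correctly flag, lies in the continuity of the inverse, and in particular in the smooth dependence of the hexagon on its alternating side lengths.
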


There is clearly a projection map ${\mathcal T}_{g,n}(\LL) \to {\mathcal M}_{g,n}(\LL)$ given by forgetting the marking. In fact, we obtain the moduli space as a quotient of Teichm\"{u}ller space by the action of the mapping class group
\[
\text{Mod}_{g,n} = \text{Diff}_+(\Sigma_{g,n})/\text{Diff}_0(\Sigma_{g,n}).
\]
Here, $\text{Diff}_+$ is the group of orientation-preserving diffeomorphisms fixing each boundary component and $\text{Diff}_0$ is the normal subgroup consisting of those diffeomorphisms isotopic to the identity. There is a natural action of the mapping class group on Teichm\"{u}ller space described as follows --- if $[\phi]$ is an element of $\text{Mod}_{g,n}$, then $[\phi]$ sends the marked hyperbolic surface $(X, f)$ to the marked hyperbolic surface $(X, f \circ \phi)$. It turns out that the action of $\text{Mod}_{g,n}$ on ${\mathcal T}_{g,n}(\LL)$ is properly discontinuous, though not necessarily free. Thus, the quotient ${\mathcal M}_{g,n}(\LL) \cong \left. \T_{g,n}(\LL) \right/ \text{Mod}_{g,n}$ is naturally an orbifold.

The Teichm\"{u}ller space $\T_{g,n}(\LL)$ can be endowed with the canonical symplectic form
\[
\omega = \sum_{k=1}^{3g-3+n} d\ell_k \wedge d\tau_k
\]
using the Fenchel--Nielsen coordinates. Although this is a rather trivial statement, a remarkable fact is that this construction is invariant under the action of the mapping class group. Therefore, $\omega$ descends to a symplectic form on the quotient, namely the moduli space ${\mathcal M}_{g,n}(\LL)$. This is referred to as the Weil--Petersson form and we will also denote it by $\omega$. Its existence allows for the techniques of symplectic geometry to be used in the study of moduli spaces. It is important to note that as $\LL$ varies, the symplectic structure of ${\mathcal M}_{g,n}(\LL)$ varies, even though its smooth structure does not.

\subsection{Combinatorial moduli space}

An important notion in the study of moduli spaces is the combinatorial structure known in the literature as a ribbon graph or fatgraph. Earlier, we stated that a ribbon graph of type $(g,n)$ is essentially the 1-skeleton of a cell decomposition of a genus $g$ surface which has $n$ faces. We require the vertices to have degree at least three and the faces to be labelled from 1 up to $n$. Note that such a graph may possibly have loops or multiple edges. The orientation of the surface produces a cyclic ordering of the oriented edges pointing towards each vertex. Conversely, given the underlying graph and the cyclic ordering of the oriented edges pointing towards each vertex, the genus of the surface and its cell decomposition may be recovered. This is accomplished by using the extra structure to thicken the graph into a surface with boundaries. These boundaries may then be filled in with disks to produce a closed surface.

One usually draws ribbon graphs with the convention that the cyclic ordering of the oriented edges pointing towards each vertex is induced by the orientation of the page. For example, the following diagram shows a ribbon graph of type $(1,1)$ as well as the surface obtained by thickening the graph.

\begin{center}
\includegraphics[scale=1.1]{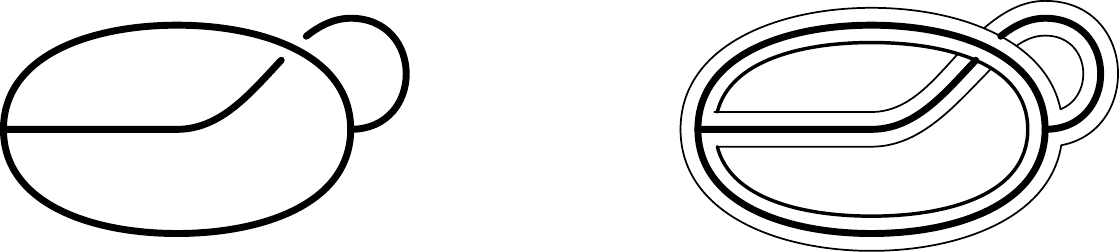}
\end{center}

Occasionally, it is useful to think of a ribbon graph in the following more precise way. Given a 1-skeleton $\Gamma$ of a cell decomposition, let $X$ denote the set of its oriented edges and let $s_0$ be the permutation on $X$ which cyclically permutes all oriented edges pointing towards the same vertex in an anticlockwise manner. Also, let $s_1$ be the permutation on $X$ which interchanges each pair of oriented edges which correspond to the same underlying edge. The set $X / \langle s_0 \rangle$ is canonically equivalent to the set of vertices of $\Gamma$ while the set $X / \langle s_1 \rangle$ is canonically equivalent to the set of edges of $\Gamma$. Furthermore, if we let $s_2 = s_0^{-1} s_1$, then the set $X / \langle s_2 \rangle$ is canonically equivalent to the set of faces of $\Gamma$. Therefore, one can alternatively consider a ribbon graph to be a triple $(X, s_0, s_1)$ where $X$ is a finite set, $s_0$ is a permutation on $X$ without fixed points or transpositions and $s_1$ is an involution on $X$ without fixed points. We also require a labelling in the form of a bijection from $X / \langle s_2 \rangle$ to $\{1, 2, \ldots, n\}$. Define two ribbon graphs $(X, s_0, s_1)$ and $(\overline{X}, \overline{s}_0, \overline{s}_1)$ to be isomorphic if and only if there exists a bijection $f: X \to \overline{X}$ such that $f \circ s_0 = \overline{s}_0 \circ f$ and $f \circ s_1 = \overline{s}_1 \circ f$. We also impose the condition that $f$ must preserve the labelling of the boundary components. A ribbon graph automorphism is, of course, an isomorphism from a ribbon graph to itself. The set of automorphisms of a ribbon graph $\Gamma$ forms a group which is denoted by $\text{Aut}(\Gamma)$.

As mentioned earlier, a ribbon graph with a positive real number assigned to each edge is referred to as a metric ribbon graph. The metric associates to each face in the cell decomposition a perimeter, which is simply the sum of the numbers appearing around the boundary of the face. Let ${\mathcal MRG}_{g,n}(\x)$ denote the moduli space of metric ribbon graphs of type $(g,n)$ whose perimeters are prescribed by $\x = (x_1, x_2, \ldots, x_n)$, where equivalence of metric ribbon graphs corresponds to isometry of metric spaces. For every ribbon graph $\Gamma$ of type $(g, n)$, consider the subset ${\mathcal MRG}_\Gamma(\x) \subseteq {\mathcal MRG}_{g,n}(\x)$ consisting of those metric ribbon graphs whose underlying ribbon graph is $\Gamma$. Note that ${\mathcal MRG}_\Gamma(\x)$ can be naturally identified with the following quotient of a possibly empty polytope by a finite group.
\[
\left. \left\{ \mathbf{e} \in \mathbb{R}_+^{E(\Gamma)} \; \middle \vert \; A_\Gamma \mathbf{e} = \x \right\} \right/ \text{Aut}(\Gamma)
\]
Here, $\mathbf{e}$ represents the lengths of the edges in the metric ribbon graph while $A_\Gamma$ is the linear map which represents the adjacency between faces and edges in the cell decomposition corresponding to $\Gamma$. Thus, ${\mathcal MRG}_\Gamma(\x)$ is an orbifold cell and these naturally glue together via edge degenerations --- in other words, when an edge length goes to zero, the edge contracts to give a ribbon graph with fewer edges. This cell decomposition for ${\mathcal MRG}_{g,n}(\x)$ equips it with not only a topology, but also an orbifold structure. The main reason for considering ${\mathcal M}_{g,n}(\x)$, sometimes known as the combinatorial moduli space, is the following.

\begin{proposition} \label{bowditch-epstein}
The moduli spaces ${\mathcal M}_{g,n}(\x)$ and ${\mathcal MRG}_{g,n}(\x)$ are homeomorphic as orbifolds.
\end{proposition}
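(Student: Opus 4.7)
The plan is to construct explicit mutually inverse maps between ${\mathcal M}_{g,n}(\x)$ and ${\mathcal MRG}_{g,n}(\x)$, following Bowditch and Epstein, and to verify continuity and compatibility with the orbifold structure on each side.

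First I would define the forward map, sending a hyperbolic surface $S$ with geodesic boundary to its spine. For $p \in S$, let $d(p)$ denote the minimum hyperbolic distance from $p$ to $\partial S$. A standard argument in $2$-dimensional hyperbolic geometry shows that the set of points where this minimum is realised by at least two distinct shortest geodesic segments is a finite embedded geodesic $1$-complex $\Gamma(S) \subset S$ all of whose vertices have degree at least three. The complement $S \setminus \Gamma(S)$ retracts onto $\partial S$ along the unique shortest geodesic through each point, producing exactly $n$ regions and endowing $\Gamma(S)$ with the structure of a ribbon graph of type $(g,n)$. Each edge $e$ of the spine determines arcs on the adjacent boundary components consisting of points whose closest spine point lies on $e$, and an appropriate combination of these arc lengths provides a metric on $\Gamma(S)$ for which the perimeter of the $i$-th face is $x_i$.

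In the reverse direction, I would reconstruct a hyperbolic surface $S(\Gamma)$ from a metric ribbon graph $\Gamma$ by gluing standard hyperbolic pieces indexed by the edges of $\Gamma$. Each edge $e$ contributes a hyperbolic polygon with one side a geodesic of length $\ell(e)$ playing the role of the future spine edge, another side a geodesic arc of the boundary, and the remaining sides perpendicular to both. At each vertex of $\Gamma$ of degree $d$, the relevant $d$ perpendicular sides are identified in the cyclic order induced by the ribbon structure so that they meet at a single interior point of the resulting surface. A direct calculation in the upper half-plane verifies that the total angle at each such interior point is $2\pi$ and that the assembled surface is smooth with geodesic boundary components of the prescribed lengths $x_1, \ldots, x_n$.

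I would then check that these constructions are mutually inverse: the spine of $S(\Gamma)$ is by construction precisely the union of the geodesic segments of length $\ell(e)$ used in the reconstruction, so $\Gamma(S(\Gamma)) = \Gamma$ metrically, and conversely, decomposing a hyperbolic surface $S$ along the perpendiculars dropped from each vertex of $\Gamma(S)$ to $\partial S$ cuts it into exactly the standard pieces used in the reverse construction. Continuity on each stratum ${\mathcal MRG}_\Gamma(\x)$ is immediate from the explicit formulas, and the isotropy group at a metric ribbon graph is canonically $\text{Aut}(\Gamma)$ on both sides, giving orbifold compatibility within each cell. The main obstacle is the analysis at the strata boundaries: one must verify that the combinatorial type of the spine degenerates in exactly the manner prescribed by edge contraction in the ribbon graph cell complex, and that the bijection remains continuous across these lower-dimensional loci. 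Controlling how the cut locus of $\partial S$ behaves under small perturbations of the hyperbolic structure --- particularly near surfaces whose spine has vertices of valence greater than three --- constitutes the technical heart of the Bowditch--Epstein argument and where most of the work lies.
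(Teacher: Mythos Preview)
Your proposal is correct and follows the same Bowditch--Epstein spine approach that the paper sketches; the paper itself does not give a complete proof but describes the spine map, the decomposition into hexagons along the ribs, and defers the remaining verification (including the behaviour at strata boundaries that you correctly flag as the technical heart) to the literature. One small alignment issue: in the paper's convention the number $\ell(e)$ assigned to an edge is the length of the \emph{boundary} side of the associated hexagon rather than the hyperbolic length of the spine segment, so the reconstruction pieces should be the symmetric right-angled hexagons with the spine edge appearing as a diagonal, not as a side.
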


This equivalence can be shown via uniformisation and the notion of Jenkins--Strebel quadratic differentials, as originally noted by Harer, Mumford and Thurston. However, it will be more advantageous for us to consider the hyperbolic geometric proof due to Bowditch and Epstein \cite{bow-eps}. The main idea is to associate to a hyperbolic surface $S$ with geodesic boundary its spine $\Gamma(S)$. For every point $p \in S$, let $n(p)$ denote the number of shortest paths from $p$ to the boundary. Generically, we have $n(p) = 1$ and we define the spine as
\[
\Gamma(S) = \{p \in S \mid n(p) \geq 2\}.
\]
The locus of points with $n(p) = 2$ consists of a disjoint union of open geodesic segments. These correspond precisely to the edges of a graph embedded in $S$. The locus of points with $n(p) \geq 3$ forms a finite set which corresponds to the set of vertices of the aforementioned graph. In fact, if $n(p) \geq 3$, then the corresponding vertex will have degree $n(p)$. In this way, $\Gamma(S)$ has the structure of a ribbon graph. Furthermore, it is a deformation retract of the original hyperbolic surface, so if $S$ has genus $g$ and $n$ boundary components, then $\Gamma(S)$ will be a ribbon graph of type $(g,n)$.

Now for each vertex $p$ of $\Gamma(S)$, consider the $n(p)$ shortest paths from $p$ to the boundary. We refer to these geodesic segments as ribs and note that they are perpendicular to the boundary of $S$. The diagram below shows part of a hyperbolic surface, along with its spine and ribs. Cutting $S$ along its ribs leaves a collection of hexagons, each with four right angles and a reflective axis of symmetry along one of the diagonals. In fact, this diagonal is one of the edges of $\Gamma(S)$ and we assign to it the length of the side of the hexagon which lies along the boundary of $S$. Of course, there are two such sides --- however, the reflective symmetry guarantees that they are equal in length. In this way, $\Gamma(S)$ becomes a metric ribbon graph of type $(g,n)$. By construction, the perimeters of $\Gamma(S)$ correspond precisely with the lengths of the boundary components of $S$, so we have a map $\Gamma: {\mathcal M}_{g,n}(\x) \to {\mathcal MRG}_{g,n}(\x)$. Although Bowditch and Epstein considered only the case of cusped hyperbolic surfaces, one can show that this is in fact a homeomorphism of orbifolds using a proof entirely analogous to theirs. Further details may be found in \cite{bow-eps, do}.

\begin{center}
\includegraphics{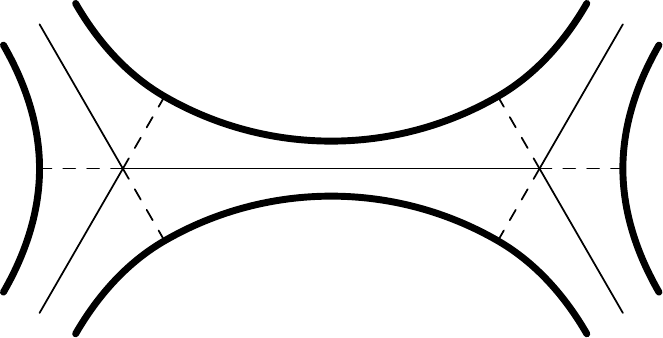}
\end{center}

\subsection{Intersection theory on moduli spaces of curves}

For non-negative integers $g$ and $n$ satisfying the Euler characteristic condition $2 - 2g - n < 0$, define the moduli space of curves as follows.
\[
{\mathcal M}_{g,n} = \left. \left\{ (C, p_1, p_2, \ldots, p_n) \; \left| \begin{array}{l} C \text{ is a smooth algebraic curve of genus } g \\ \text{with } n \text{ distinct points } p_1, p_2, \ldots, p_n \end{array} \right. \right\} \right / \sim
\]
Here, $(C, p_1, p_2, \ldots, p_n) \sim (D, q_1, q_2, \ldots, q_n)$ if and only if there exists an isomorphism from $C$ to $D$ which sends $p_k$ to $q_k$ for all $k$. Of course, one can equivalently consider these algebraic curves as Riemann surfaces with punctures rather than marked points. The uniformisation theorem then allows us to deduce that ${\mathcal M}_{g,n}$, with its natural topology, is diffeomorphic to the moduli space of hyperbolic surfaces ${\mathcal M}_{g,n}({\mathbf L})$ for all $\mathbf L$.

It is often more natural to work with the Deligne--Mumford compactification of the moduli space of curves.
\[
\overline{\mathcal M}_{g,n} = \left. \left\{ (C, p_1, p_2, \ldots, p_n) \; \left| \begin{array}{l} C \text{ is a stable algebraic curve of genus } g \text{ with} \\ n \text{ distinct smooth points } p_1, p_2, \ldots, p_n \end{array} \right. \right\} \right / \sim
\]
An algebraic curve is called stable if it has at worst nodal singularities and a finite automorphism group. The practical interpretation of this latter condition is that every rational component of the curve must have at least three points which are nodes or marked points. An important problem concerning $\M_{g,n}$ is the calculation of its intersection theory with respect to certain characteristic classes. The classes that we will consider live in the cohomology ring $H^*(\M_{g,n})$ and arise from taking Chern classes of natural complex vector bundles.\footnote{Readers with a more algebraic predilection may prefer to think of these classes as living in the Chow ring $A^*(\M_{g,n})$.}

Given a stable genus $g$ curve with $n+1$ marked points, one can forget the point labelled $n+1$ to obtain a genus $g$ curve with $n$ marked points. Unfortunately, the resulting curve may not be stable, but gives rise to a well-defined stable curve after contracting all unstable rational components. This gives a map $\pi: \M_{g,n+1} \to \M_{g,n}$ known as the forgetful morphism and which can be interpreted as the universal family over $\M_{g,n}$. In other words, one can take a stable curve $C \in \M_{g,n}$ along with a point $p$ on $C$ and associate a stable curve in $\M_{g,n+1}$ to the pair $(C,p)$. In particular, the fibre over a point $C \in \M_{g,n}$ is essentially the marked curve described by $C$. So the point labelled $k$ defines a section $\sigma_k: \M_{g,n} \to \M_{g,n+1}$ for $k = 1, 2, \ldots, n$. The forgetful morphism can be used to pull back cohomology classes, but it will also be useful to push them forward. This is possible via the Gysin map $\pi_*: H^*(\M_{g,n+1}) \to H^*(\M_{g,n})$, a homomorphism of graded rings with grading $-2$ which can be described as integration along fibres.

Now consider the vertical cotangent bundle on $\M_{g,n+1}$ with fibre at $(C,p)$ equal to the cotangent line $T_p^*C$. Unfortunately, this definition is nonsensical when $p$ is a singular point of $C$. Therefore, it is necessary to consider the relative dualising sheaf, the unique line bundle on $\M_{g,n+1}$ which extends the vertical cotangent bundle. More precisely, if we let ${\mathcal K}_X$ be the canonical line bundle on $\M_{g,n+1}$ and ${\mathcal K}_B$ be the canonical line bundle on $\M_{g,n}$, then it can be defined as
\[
{\mathcal L} = {\mathcal K}_X \otimes \pi^* {\mathcal K}_B^{-1}.
\]

There are natural line bundles on $\M_{g,n}$ formed by pulling back $\mathcal L$ along the sections $\sigma_k: \M_{g,n} \to \M_{g,n+1}$ for $k = 1, 2, \ldots, n$. Taking Chern classes of these line bundles, we obtain the psi-classes
\[
\psi_k = c_1(\sigma_k^* {\mathcal L}) \in H^2(\M_{g,n}; \mathbb{Q}) \qquad \text{for } k = 1, 2, \ldots, n.
\]

Define the twisted Euler class by $e = c_1 \left({\mathcal L} \left( D_1 + D_2 + \cdots + D_n \right) \right)$, where $D_k$ is the divisor on $\M_{g,n+1}$ representing the image of the section $\sigma_k$. Taking the push-forward of its powers, we obtain the kappa-classes
\[
\kappa_m = \pi_*(e^{m+1}) \in H^{2m}(\M_{g,n}; \mathbb{Q}) \qquad \text{for } m = 0, 1, 2, \ldots, 3g-3+n.
\]
In the following, we will only be dealing with $\kappa_1$, which appears naturally in the study of moduli spaces of hyperbolic surfaces via the following result of Wolpert \cite{wol2}.

\begin{proposition} \label{de-rham}
The Weil--Petersson form $\omega$ on the moduli space of hyperbolic surfaces ${\mathcal M}_{g,n}(\mathbf{0})$ induces the de~Rham cohomology class
\[
[\omega] = 2\pi^2 \kappa_1 \in H^2(\M_{g,n}; \mathbb{R}).
\]
\end{proposition}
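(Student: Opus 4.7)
The plan is to realize both sides of the identity as cohomology classes coming from an explicit curvature form on the total space of the universal curve $\pi: \M_{g,n+1} \to \M_{g,n}$, and then match them via a fibre integration. Concretely, I would construct a Hermitian metric on the line bundle $\mathcal{L}(D_1 + \cdots + D_n)$ whose Chern form $\eta$ simultaneously computes $e = c_1(\mathcal{L}(D_1+\cdots+D_n))$ and, after push-forward and splitting into vertical and horizontal pieces, recovers $\omega$ on the base up to the asserted factor.

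The first step is to put a natural metric on $\mathcal{L}(D_1+\cdots+D_n)$. Each fibre of $\pi$ is an $n$-punctured Riemann surface of genus $g$, which carries a unique complete hyperbolic metric of curvature $-1$; the sections of $\mathcal{L}(D_1+\cdots+D_n)$ on a fibre are precisely the holomorphic 1-forms regular away from the punctures and with at worst simple poles there, and the hyperbolic metric provides a pointwise norm on them. The key local fact is that for this metric the restriction $\eta|_C$ to each fibre $C$ equals $\tfrac{1}{2\pi}\,\omega_{\mathrm{hyp}}$, since the hyperbolic area form is (up to the standard $\tfrac{1}{2\pi}$ in the Chern--Weil normalization) minus the Ricci form of the cotangent bundle of a curvature $-1$ surface.

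The second step is to compute $\pi_*(\eta^2)$. Decompose $\eta = \eta_v + \eta_h$ with respect to the horizontal distribution coming from harmonic Beltrami differential representatives of tangent vectors to $\M_{g,n}$ at each point. Then $\eta_v^2 = 0$, $\eta_h^2$ has no vertical piece and so integrates to zero fibrewise, and the surviving term $\pi_*(2\eta_v \wedge \eta_h)$ pairs the hyperbolic area form of the fibre against the horizontal curvature. A direct calculation identifies $\eta_h$ with the harmonic Beltrami differential encoding the infinitesimal change of hyperbolic metric along a Teichm\"uller deformation; plugging this into the fibrewise integral yields exactly $\tfrac{1}{2\pi^2}$ times the Weil--Petersson Hermitian pairing of the tangent vectors. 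Comparing real and imaginary parts, together with Wolpert's formula $\omega = \sum d\ell_k \wedge d\tau_k$ for the symplectic part, gives $\pi_*(\eta^2) = \tfrac{1}{2\pi^2}\omega$, hence $[\omega] = 2\pi^2\kappa_1$ since $\kappa_1 = \pi_*(e^2)$.

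The main obstacle is the infinitesimal identification in the second step: showing that the horizontal component $\eta_h$ is genuinely given by the harmonic Beltrami differential and that the resulting fibre integral produces the Weil--Petersson pairing with exactly the correct constant. This requires an Ahlfors-type formula for the derivative of the hyperbolic area form along a Teichm\"uller deformation, together with care near the cusps to show that all improper integrals converge and that no contribution arises from the boundary strata of the Deligne--Mumford compactification. A cleaner packaging would be to prove the identity first on $\overline{\mathcal M}_{g,0}$ via Grothendieck--Riemann--Roch applied to $\pi$ and then transport it to $\overline{\mathcal M}_{g,n}$ by iterated application of the forgetful morphism; the constant $2\pi^2$ is the only quantitative input and is already pinned down by the one-fibre computation above.
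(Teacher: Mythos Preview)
The paper does not prove this proposition at all: it is stated as background and attributed to Wolpert via the citation \texttt{wol2}, with no argument given. So there is no ``paper's own proof'' to compare against --- you have supplied far more than the paper does.

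Your outline is essentially the classical approach, and indeed close in spirit to how Wolpert originally established the result: equip the relative dualising sheaf with the fibrewise hyperbolic metric, compute its Chern form, and push forward $\eta^2$ along the universal curve to identify $\kappa_1$ with a multiple of the Weil--Petersson form. The vertical restriction $\eta|_C = \tfrac{1}{2\pi}\omega_{\mathrm{hyp}}$ is correct, and the strategy of splitting $\eta$ into vertical and horizontal parts and integrating over the fibre is sound.

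That said, your sketch is honest about where the real work lies and does not actually carry it out. The identification of $\eta_h$ with the harmonic Beltrami differential and the computation that the fibre integral reproduces the Weil--Petersson pairing with the precise constant $\tfrac{1}{2\pi^2}$ is the entire content of the theorem; saying it ``requires an Ahlfors-type formula'' and ``care near the cusps'' is accurate but leaves the proof incomplete. Your alternative suggestion of using Grothendieck--Riemann--Roch establishes the cohomological identity $12\kappa_1 = 12\lambda_1 - [\text{boundary}]$ but does not by itself produce the analytic constant $2\pi^2$ relating $\kappa_1$ to the symplectic form $\omega$; that still needs the curvature computation. If you want a complete argument, you should consult Wolpert's original paper directly, where the local K\"ahler potential computation and the cusp analysis are done in full.
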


\section{Hyperbolic surfaces with large boundary lengths} \label{long-boundaries}

Fix a metric ribbon graph $\Gamma \in {\mathcal MRG}_{g,n}(\x)$ and let $N\Gamma \in {\mathcal MRG}_{g,n}(N\x)$ denote the same underlying ribbon graph with the metric scaled by a factor of $N$. We will be interested in the geometry of the hyperbolic surface $S(N\Gamma)$ arising from the Bowditch--Epstein construction as $N$ approaches infinity. The Gauss--Bonnet theorem ensures that the surface area remains constant in the limit. So as the boundary lengths become large, the surface appears to stretch and resemble a graph with the surface area concentrated around the vertices. One goal of this section is to formalise this intuitive picture.

The Bowditch--Epstein construction produces the metric ribbon graph $N\Gamma$ embedded as the spine of the hyperbolic surface $S(N\Gamma)$. Previously, we defined a rib to be a shortest path from a vertex of the spine to the boundary of the surface. Cutting $S(N\Gamma)$ along its ribs leaves a collection of hexagons --- let us call them edge hexagons --- each of which includes a unique edge of $N\Gamma$. Given an edge hexagon, one can lift it to the hyperbolic plane and consider the two sides which are parallel to the edge. We refer to the common perpendicular between the two corresponding lines as an intercostal.

\begin{lemma} \label{intercostal}
If $\delta(e)$ denotes the length of the intercostal corresponding to an edge $e$ in $N\Gamma$, then
\[
\lim_{N \to \infty} \delta(e) = 0.
\]
\end{lemma}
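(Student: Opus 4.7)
The approach is through hyperbolic trigonometry applied to the edge hexagon. Recall that the edge hexagon $ABCDEF$ associated to $e$ has $A$ and $D$ as the spine endpoints of $e$ (so the diagonal $AD$ is the spine edge itself), sides $AB$, $CD$, $DE$, $FA$ are ribs, sides $BC$ and $EF$ are arcs of $\partial S(N\Gamma)$ each of length $L = N\ell(e)$, and the angles at $B, C, E, F$ are right. The axial reflection through $AD$ exchanges $BC$ with $EF$, so the common perpendicular $PP'$ to the geodesic lines containing these two arcs must be invariant under the reflection; since it does not coincide with $AD$, it must meet $AD$ perpendicularly at some point $M$, which is the midpoint of the intercostal.

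The core of the proof is an identity in the Lambert quadrilateral $MABP$ with right angles at $M$, $B$, $P$ and angle $\angle A / 2$ at $A$; its sides are $u = MA$, $r_A = AB$, $v = BP$, and $\delta/2 = PM$. Drawing the diagonal $AP$ splits this quadrilateral into right triangles $AMP$ (right at $M$) and $ABP$ (right at $B$); the hyperbolic Pythagorean theorem in each gives
\[
\cosh(u)\cosh(\delta/2) = \cosh(AP) = \cosh(r_A)\cosh(v).
\]
A second relation comes from the fact that the angles at $P$ in the two subtriangles are complementary (they sum to the right angle $\angle BPM$ of the quadrilateral); equating $\sin\angle APB = \cos\angle APM$ and applying the hyperbolic right-triangle formulas yields
\[
\sinh(u) = \sinh(v)\cosh(r_A).
\]
Dividing the first identity by the second eliminates $r_A$ and $\cosh(AP)$ to produce the clean formula
\[
\cosh(\delta/2) = \frac{\tanh(u)}{\tanh(v)}.
\]

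The asymptotic step is then immediate. Regardless of whether the foot $P$ lies in the segment $BC$ or on its extension, one has $\max(BP, PC) \geq L/2$, so as $L = N\ell(e) \to \infty$ at least one of $v$ or $v' = PC$ tends to infinity. After possibly replacing $MABP$ by the mirror Lambert quadrilateral $MPCD$ and repeating the above derivation, we may assume $v \to \infty$. The inequality $\cosh(\delta/2) \geq 1$ applied to the identity forces $\tanh(u) \geq \tanh(v)$, hence $u \geq v$, so $u \to \infty$ as well; thus $\tanh(u), \tanh(v) \to 1$, $\cosh(\delta/2) \to 1$, and therefore $\delta \to 0$. The main obstacle I anticipate is the clean derivation of the two trigonometric identities in the Lambert quadrilateral (and verifying that the formulas remain valid in the degenerate cases where $P$ or $M$ falls outside the hexagon); once those are in hand, the asymptotic conclusion is a one-line calculation.
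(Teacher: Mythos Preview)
Your proof is correct and uses the same geometric setup as the paper: both isolate the Lambert quadrilateral (trirectangle) bounded by the spine edge, a rib, a boundary arc, and half the intercostal. The paper's execution is more direct, however: it simply quotes the standard trirectangle identity $\cos\theta = \sinh a\,\sinh(\delta/2)$ (with $\theta$ the acute angle at the spine vertex and $a = v$ the boundary side), from which $\sinh(\delta/2) \le 1/\sinh(N\ell/2)\to 0$ follows in one line, bypassing your derived identity $\cosh(\delta/2)=\tanh u/\tanh v$ and the auxiliary deduction $u\ge v\to\infty$. Your route has the virtue of being self-contained, while the paper's is shorter by appealing to a known formula. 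One small slip: your second identity $\sinh u = \sinh v\,\cosh r_A$ actually follows from $\cos\angle APB = \sin\angle APM$ rather than from $\sin\angle APB = \cos\angle APM$ as you wrote (the latter yields $\sinh r_A = \sinh(\delta/2)\cosh u$ instead), but since both equations are consequences of complementarity this does not affect the argument.
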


\begin{proof}
The diagram below shows the edge $e$ and its corresponding edge hexagon. The boundary of the hexagon comprises four ribs which have been drawn as dotted lines and two boundary segments which have been drawn as solid lines. Note that the intercostal is perpendicular to the two boundary segments and hence, by symmetry, must also be perpendicular to the edge $e$. So there are four hyperbolic trirectangles in the diagram.

\begin{center}
\includegraphics{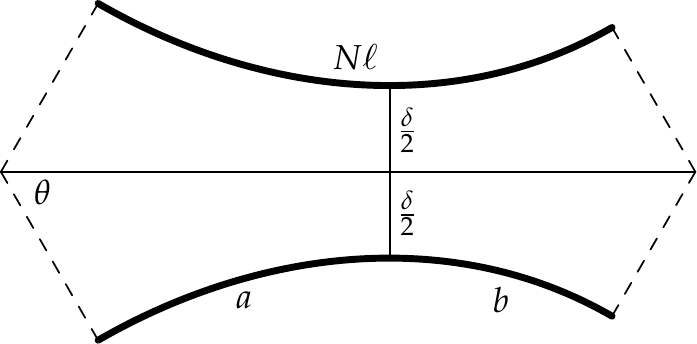}
\end{center}

Suppose that the length of the edge $e$ in the metric ribbon graph $N\Gamma$ is $N\ell$. Consider the lower left trirectangle in the diagram and, without loss of generality, assume that the length marked $a$ satisfies $a \geq \frac{N\ell}{2}$. By the standard trigonometric formula for trirectangles --- for example, consider the reference \cite{bus} --- we have the equation $\cos \theta = \sinh a \sinh \frac{\delta}{2}$. Therefore,
\[
0 \leq \sinh \frac{\delta}{2} = \frac{\cos \theta}{\sinh a} \leq \frac{1}{\sinh \frac{N \ell}{2}}
\]
and taking the $N \to \infty$ limit leads to the desired result.
\end{proof}

Although the proof of the previous lemma remains valid, the intercostal might not actually intersect the edge, as depicted in the diagram. However, the next lemma guarantees that this assumption is indeed correct, at least for $N$ sufficiently large.

\begin{lemma} \label{intercostal-edge}
If $N$ is sufficiently large, then the intercostal corresponding to an edge intersects the edge.
\end{lemma}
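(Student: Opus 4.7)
The plan is to exploit the reflective symmetry of the edge hexagon across $e$, reducing the claim to a sector-angle bound at the two endpoints of $e$, and then to establish that bound via hyperbolic trigonometry at each spine vertex combined with the vertex angle-sum identity.

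By the reflection across $e$, the midpoint $M$ of the intercostal lies on the geodesic $\Lambda$ extending $e$, and the intercostal meets $\Lambda$ perpendicularly at $M$. Parameterize $\Lambda$ by arc length with $v_1$ at parameter $0$ and $v_2$ at $N\ell$, and let $D(s) = d(\Lambda(s), \gamma)$, where $\gamma$ is the full geodesic containing one of the hexagon's two boundary segments. The standard formula $\sinh D(s) = \sinh(\delta/2)\cosh(s-s_M)$ makes $D$ strictly convex with minimum $\delta/2$ at the coordinate $s_M$ of $M$ and boundary values $D(0)=\rho_1$ and $D(N\ell)=\rho_2$ (the rib lengths, since each rib realizes the perpendicular from $v_i$ to $\gamma$). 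The first variation of distance gives $D'(0) = -\cos\theta_1$ and $D'(N\ell) = \cos\theta_2$, where $\theta_i$ is the angle at $v_i$ between $e$ and an adjacent rib, i.e.\ half the sector angle at $v_i$ for $e$. Hence $M$ lies in the open segment $e$ if and only if $\theta_1,\theta_2 < \pi/2$, equivalently both sector angles at the endpoints of $e$ are strictly less than $\pi$.

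The key step is therefore to show that every sector angle at every spine vertex is less than $\pi$ for $N$ large. At any vertex $v$ of degree $d \geq 3$, all incident ribs share a common length $\rho(v)$, which stays bounded above by a topological constant (an embedded hyperbolic disk about $v$ must have area at most $2\pi(2g-2+n)$). Combining the Lambert-quadrilateral identity $\cos\theta_k = \sinh(\delta_k/2)\sinh b_k$ from the proof of Lemma~\ref{intercostal} with the distance formula $\sinh\rho(v) = \sinh(\delta_k/2)\cosh s_k(v)$ and the diagonal relation $\sinh b_k \cosh\rho(v) = \sinh s_k(v)$ yields the clean identity
\[
\cos\theta_k(v) = \tanh\rho(v)\,\tanh s_k(v),
\]
where $s_k(v)$ is the \emph{signed} distance from $v$ to the intercostal midpoint of $e_k$, positive in the direction of the opposite endpoint of $e_k$. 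Lemma~\ref{intercostal} forces $\delta_k\to 0$, and combined with $\rho(v)$ bounded, the distance formula then forces $|s_k(v)|\to\infty$; hence $\tanh s_k(v)\to \pm 1$, and each $\theta_k(v)$ tends either to $\theta^+ = \arccos(\tanh\rho(v)) \in (0,\pi/2)$ or to $\theta^- = \pi - \theta^+ \in (\pi/2,\pi)$.

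Finally, the vertex identity $\sum_k \theta_k(v) = \pi$ (from the $d$ sector angles summing to $2\pi$) pins down which limit occurs at each edge. If $m$ of the $d$ incident edges at $v$ pick up the limit $\theta^+$ and $d-m$ pick up $\theta^-$, the identity becomes $(2m - d)\theta^+ = (m - d + 1)\pi$, and an elementary case analysis using $\theta^+ \in (0,\pi/2)$ and $d\geq 3$ leaves only $m = d$ admissible, giving $\theta_k(v) \to \pi/d \leq \pi/3$ at every vertex. Hence $\theta_1,\theta_2 < \pi/2$ for $N$ sufficiently large, and the convexity argument of the first paragraph completes the proof. The main obstacle is the careful signed-distance bookkeeping required to make the identity $\cos\theta_k = \tanh\rho(v)\tanh s_k(v)$ valid in both the "interior" and "exterior" regimes of the intercostal — in the exterior regime the underlying Lambert quadrilateral has its acute angle at $\pi - \theta_k$ and the side $b_k$ must be taken with the appropriate sign — after which the vertex angle-sum case analysis is routine.
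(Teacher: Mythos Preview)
Your approach is genuinely different from the paper's qualitative contradiction argument: you reduce to a sector-angle bound via first variation, derive the clean identity $\cos\theta_k(v)=\tanh\rho(v)\tanh s_k(v)$ (which is correct and elegant), and then run a case analysis against the constraint $\sum_k\theta_k(v)=\pi$. This is more quantitative and, if completed, would simultaneously yield Lemma~\ref{angle-convergence}.

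There is, however, a genuine gap in the boundedness step. You assert that $\rho(v)$ is bounded above via an embedded-disk area argument, but you never establish that the metric ball $B(v,\rho(v))$ is embedded; this would require the injectivity radius at $v$ to be at least $\rho(v)$, which is not automatic and is not proved. Worse, the place where you invoke ``$\rho(v)$ bounded'' to force $|s_k(v)|\to\infty$ via $\cosh s_k(v)=\sinh\rho(v)/\sinh(\delta_k/2)$ actually requires $\rho(v)$ bounded \emph{away from zero}, not bounded above --- these are different conditions and you only argue for the latter.

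This matters for your case analysis. The limiting equation $(2m-d)\theta^+=(m-d+1)\pi$ with $\theta^+\in(0,\pi/2)$ does force $m=d$ --- but only when $\theta^+$ stays bounded away from $0$, i.e.\ when $\rho(v)$ stays bounded above. If $\rho(v)\to\infty$ along a subsequence then $\theta^+\to 0$, and the equation becomes $(d-m)\pi=\pi$, which admits $m=d-1$: one edge with $\theta_k(v)\to\pi$ and the remaining $d-1$ with $\theta_k(v)\to 0$. Your case analysis silently excludes this boundary case, and your area argument does not rule it out. (The lower bound $\rho(v)\not\to 0$ \emph{can} be recovered from your identity: if $\rho(v)\to 0$ then every $\cos\theta_k\to 0$, forcing $\sum_k\theta_k\to d\pi/2\neq\pi$; but you do not give this argument, and you still need a separate argument for the upper bound.) The paper's proof sidesteps all of this by working directly with the third boundary lift near $v$.
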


\begin{proof}
To obtain a contradiction, suppose that the intercostal corresponding to the edge $e$ does not intersect it. Consider a lift of $e$ to the hyperbolic plane, along with the corresponding intercostal of length $\delta$, the adjacent boundary components, and the ribs joining $e$ to these boundary components. Let the vertex $v$ closer to the intercostal be at distance $r$ from the two adjacent boundary components.

\begin{center}
\includegraphics{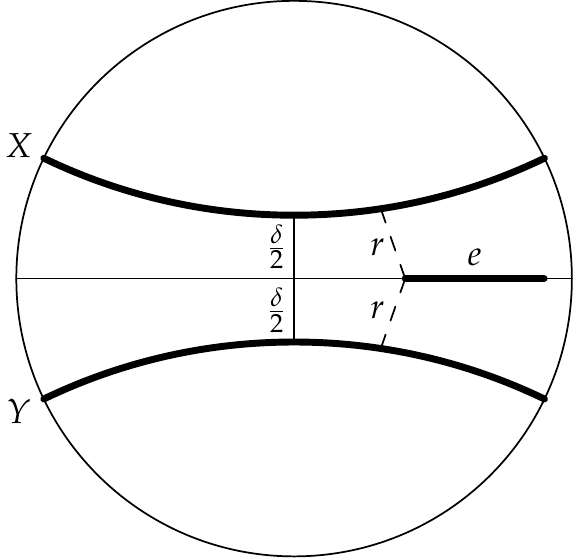}
\end{center}

There must exist a third lift of a boundary component which is also at distance $r$ from the vertex $v$. Furthermore, the endpoints of this lift must lie between the points labelled $X$ and $Y$ in the diagram. By Lemma~\ref{intercostal}, the length $\delta$ converges to zero as $N$ approaches infinity. And as $\delta$ converges to zero, it is clear that $v$ cannot remain equidistant from the three lifts of boundaries without the intercostal intersecting the edge.
\end{proof}

Since we are interested in the $N \to \infty$ limit, we may assume herein that the intercostal corresponding to an edge is the common perpendicular between opposite sides in the edge hexagon. Cutting $S(N\Gamma)$ along its intercostals leaves a collection of right-angled polygons --- let us call them vertex polygons --- each of which includes a unique vertex of $\Gamma$. Note that, at a vertex of degree $d$ in $N\Gamma$, the corresponding vertex polygon has $2d$ sides.

\begin{lemma} \label{angle-convergence}
In the $N \to \infty$ limit, the angles between adjacent edges at a degree $d$ vertex converge to $\frac{2\pi}{d}$.
\end{lemma}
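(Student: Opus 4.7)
The approach is to dissect the hyperbolic geometry near $v$ into trirectangles, extract two trigonometric identities, and then let $N\to\infty$ using Lemma~\ref{intercostal} together with a Gauss--Bonnet identity $\sum_i \alpha_i = \pi$ that rigidly constrains the angles.

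I begin by describing the vertex polygon at $v$: the component of $S(N\Gamma)$ containing $v$ obtained by cutting along all intercostals. Its boundary alternates $d$ intercostals with $d$ boundary arcs and meets at $2d$ right angles, so Gauss--Bonnet forces its area to equal $(d-2)\pi$, independent of $N$. Cutting further along the $d$ ribs and the $d$ half-edges emanating from $v$ dissects it into $2d$ hyperbolic trirectangles, each having its unique acute angle at $v$. Because each edge hexagon is symmetric under reflection across its edge, the two trirectangles bordering a given edge $e_i$ have the same acute angle at $v$; call it $\alpha_i$. Since a hyperbolic trirectangle with acute angle $\alpha$ has area $\pi/2-\alpha$, summing these areas gives
\[
\sum_{i=1}^d \alpha_i = \pi
\]
for every $N$. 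The angle at $v$ between consecutive spine edges $e_i$ and $e_{i+1}$ is $\alpha_i+\alpha_{i+1}$, so the lemma reduces to showing $\alpha_i\to\pi/d$ for every $i$.

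Next I extract two identities for each trirectangle, where $r$ denotes the common rib length at $v$ and $a_i$ the distance from $v$ to the intercostal foot on $e_i$:
\[
\cos\alpha_i \;=\; \tanh(a_i)\,\tanh(r), \qquad \cosh(a_i)\,\sinh(\delta_i/2) \;=\; \sinh(r).
\]
The first is a direct consequence of requiring Euclidean orthogonality of the boundary circle and the intercostal circle in the Poincar\'e disk model; the second follows from the standard Lambert quadrilateral relation obtained by splitting the trirectangle along its diagonal. Since $\tanh(a_i)<1$, the first identity gives $\alpha_i>\Pi(r):=\arccos(\tanh r)$ strictly, and summing over $i$ then forces
\[
r \;>\; r_d \;:=\; \operatorname{arctanh}(\cos(\pi/d)) \;>\; 0,
\]
uniformly in $N$.

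It remains to pass to the limit. By Lemma~\ref{intercostal}, $\delta_i\to0$, so $\sinh(\delta_i/2)\to0$; combined with $\sinh(r)\ge\sinh(r_d)>0$, the second identity forces $\cosh(a_i)\to\infty$, hence $a_i\to\infty$ and $\tanh(a_i)\to 1$ for every $i$. If $r$ were unbounded along some subsequence, the first identity would yield $\cos\alpha_i\to 1$ for every $i$, so $\sum_i\alpha_i\to 0$, contradicting $\sum_i\alpha_i=\pi$; hence $r$ is bounded. Along any subsequence with $r\to r^*\in[r_d,\infty)$, the first identity gives $\cos\alpha_i\to\tanh(r^*)$ simultaneously for all $i$, so every $\alpha_i$ converges to the common value $\Pi(r^*)$; passing to the limit in $\sum_i\alpha_i=\pi$ forces $d\,\Pi(r^*)=\pi$, which uniquely determines $r^*=r_d$. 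Consequently $r\to r_d$ and $\alpha_i\to\pi/d$ for every $i$, so the angle between adjacent edges at $v$ tends to $2\pi/d$, as claimed. The main technical work is setting up and verifying the two trirectangle identities; once they are in place, Lemma~\ref{intercostal} and the rigid equality $\sum_i\alpha_i=\pi$ bring the proof home.
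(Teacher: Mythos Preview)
Your proof is correct and follows essentially the same strategy as the paper: decompose the vertex polygon into $2d$ trirectangles, use the edge-hexagon reflection to pair them as $\alpha_1,\alpha_1,\ldots,\alpha_d,\alpha_d$, and then argue that all $\alpha_i$ coalesce in the limit because each depends only on the common rib length $r$, after which the constraint $\sum_i\alpha_i=\pi$ pins down $\alpha_i=\pi/d$. The paper phrases the coalescence step qualitatively (the trirectangles converge to ideal triangles sharing the rib, so ``basic hyperbolic trigonometry'' forces the angles across a rib to agree), whereas you carry it out quantitatively via the Lambert-quadrilateral identities $\cos\alpha_i=\tanh a_i\tanh r$ and $\sinh r=\cosh a_i\sinh(\delta_i/2)$ together with a subsequence argument; this also yields the explicit limit $r\to\operatorname{arctanh}(\cos(\pi/d))=\cosh^{-1}(1/\sin(\pi/d))$, which the paper states separately after the lemma. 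Two minor remarks: the identity $\sum_i\alpha_i=\pi$ follows more directly from the fact that the $2d$ trirectangle angles at $v$ fill out $2\pi$, so Gauss--Bonnet is unnecessary; and your verbal justifications for the two trirectangle identities are a little loose, but both are standard formulas (they appear, e.g., in Buser's treatment of trirectangles) and are applied correctly.
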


\begin{proof}
Consider a lift of a vertex polygon to the hyperbolic plane. If the vertex has degree $d$ in $N\Gamma$, then there will be $d$ edges and $d$ ribs which meet there. The vertex polygon is divided into $2d$ trirectangles by these edges and ribs. The symmetry in an edge hexagon implies that the two angles labelled $\alpha$ and $\beta$ in the diagram below must be equal.

\begin{center}
\includegraphics{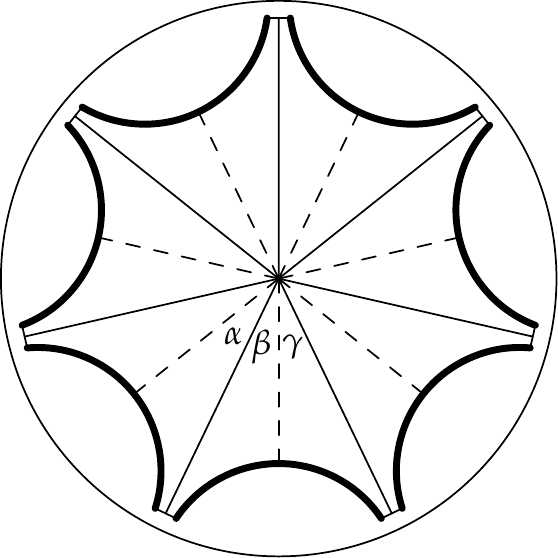}
\end{center}

It remains to show that the two angles labelled $\beta$ and $\gamma$ converge to the same value. However, in the $N \to \infty$ limit, Lemma~\ref{intercostal} ensures that the length of each intercostal approaches zero. Therefore the trirectangles which include the angles labelled $\beta$ and $\gamma$ converge to ideal triangles with one right-angle and one ideal vertex. Since they also share a common side, it follows from basic hyperbolic trigonometry that $\beta$ and $\gamma$ converge to the same value.
\end{proof}

We have deduced that in the $N \to \infty$ limit, a vertex polygon corresponding to a degree $d$ vertex resembles a regular ideal $d$-gon. One consequence is that the lengths of the ribs in $S(N\Gamma)$ at a degree $d$ vertex converge to the finite value $\cosh^{-1}\left( \frac{1}{\sin \pi/d} \right)$.

\begin{lemma} \label{length-lemma}
Let $\gamma$ be a closed geodesic of length $\ell(\gamma)$ in $\Gamma$. Since $N\Gamma$ is a deformation retract of the hyperbolic surface $S(N\Gamma)$, this defines a unique closed geodesic on $S(N\Gamma)$. After scaling the hyperbolic metric by $\frac{1}{N}$, we obtain a closed geodesic $\gamma_N$ on $\frac{1}{N} S(N\Gamma)$ whose length we denote by $\ell(\gamma_N)$. Then we have the equation
\[
\lim_{N \to \infty} \ell(\gamma_N)= \ell(\gamma).
\]
\end{lemma}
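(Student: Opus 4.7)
We prove the lemma by establishing matching upper and lower bounds on $\ell(\gamma_N)$. For the upper bound, the Bowditch--Epstein construction embeds $N\Gamma$ in $S(N\Gamma)$ as the spine, with each edge realised as a hyperbolic geodesic segment of the same length. The closed walk representing $\gamma$ on the graph therefore lifts to a piecewise hyperbolic geodesic loop of total length $N\ell(\gamma)$, freely homotopic to the unique geodesic representative $\gamma_N^{\text{full}}$ in $S(N\Gamma)$. Since hyperbolic geodesics minimise length within their free homotopy class, $N\ell(\gamma_N) \leq N\ell(\gamma)$, giving $\ell(\gamma_N) \leq \ell(\gamma)$.

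For the lower bound, I would use the decomposition of $S(N\Gamma)$ into vertex polygons (cutting along all intercostals), whose dual graph is $\Gamma$ itself. The geodesic $\gamma_N^{\text{full}}$ crosses intercostals in a sequence that determines a closed walk $w$ on $\Gamma$; since $\gamma_N^{\text{full}}$ is freely homotopic to the spine image of $\gamma$, this walk lies in the same $\pi_1(\Gamma)$-conjugacy class as $\gamma$, so by length-minimality of $\gamma$ in its class, $\text{length}_{N\Gamma}(w) \geq N\ell(\gamma)$. Within each vertex polygon $V_v$, the segment of $\gamma_N^{\text{full}}$ runs from one intercostal to another, with the two intercostals lying on consecutive edges of $w$ at hyperbolic distances from $v$ whose sum, over the entire walk, recovers each edge length of $w$ exactly once. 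By Lemmas~\ref{intercostal} and~\ref{angle-convergence}, the geometry near $v$ converges to that of a regular ideal polygon with tentacles meeting at positive angles, and the hyperbolic law of cosines then shows that the distance between any two such intercostals is at least the sum of their distances to $v$ minus a uniformly bounded constant $C$. Summing over the $k'$ crossings yields
\[
N\ell(\gamma_N) \geq N \cdot \text{length}_{N\Gamma}(w) - k' C \geq N\ell(\gamma) - k' C.
\]
Since the hyperbolic length of $\gamma_N^{\text{full}}$ is at most $N\ell(\gamma)$ and each segment has length of order $N$, the count $k'$ is uniformly bounded in $N$; dividing by $N$ and letting $N \to \infty$ gives $\liminf \ell(\gamma_N) \geq \ell(\gamma)$.

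\textbf{Main obstacle.} The principal technical step is the hyperbolic-trigonometric lower bound on the distance between intercostals at the tips of distinct tentacles of a vertex polygon, uniformly in $N$. This relies on both the vanishing of intercostal lengths (Lemma~\ref{intercostal}) and the angular convergence at spine vertices (Lemma~\ref{angle-convergence}) to ensure that the tentacles meet at angles bounded away from zero and that the central geometry stabilises as $N \to \infty$. The constant $C$ then arises as the $O(1)$ additive correction in the hyperbolic law of cosines applied to long geodesic rays emanating from a common vertex.
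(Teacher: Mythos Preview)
Your upper bound contains a genuine error: it is \emph{not} true that the Bowditch--Epstein spine embeds with each edge having hyperbolic length equal to the ribbon graph edge length. In the edge hexagon, the spine edge is the diagonal, while the ribbon graph edge length is by definition the length of the \emph{boundary} side of the hexagon. These differ. In fact orthogonal projection onto the boundary (the ribs being perpendicular to it) shows that the hyperbolic spine edge length is \emph{at least} the ribbon graph edge length, so your spine walk has hyperbolic length $\geq N\ell(\gamma)$, and the inequality $N\ell(\gamma_N)\leq N\ell(\gamma)$ does not follow. The fix is easy --- and is exactly what the paper does: go rib--boundary--rib to see that the hyperbolic spine edge length is at most the ribbon graph edge length plus $2r$, where $r$ is the maximal rib length. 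Summing, the spine walk has hyperbolic length $\leq N\ell(\gamma)+2Er$, whence $\ell(\gamma_N)\leq \ell(\gamma)+2Er/N$, and $r$ stays bounded by the remark preceding the lemma.

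Your lower bound takes a substantially more elaborate route than the paper. The paper simply decomposes into edge hexagons: the arc of $\gamma_N$ through a given edge hexagon enters and exits through ribs perpendicular to the boundary side, so its orthogonal projection onto that boundary side covers the whole side; since hyperbolic projection is $1$-Lipschitz, the arc has length at least the ribbon graph edge length. Summing gives the sharp inequality $\ell(\gamma_N)\geq \ell(\gamma)$ for every $N$ --- no asymptotics, no law of cosines, no bound on the crossing count needed. Your vertex-polygon argument can be made to work, but note that the sum $\sum(d_1+d_2)$ over segments recovers the \emph{hyperbolic spine} length of the walk $w$, not $\text{length}_{N\Gamma}(w)$; you then need the same projection inequality above to pass from one to the other. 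So the ``main obstacle'' you identify is in fact avoidable, and the actual missing ingredient in both halves of your argument is the elementary comparison between spine-edge and boundary-segment lengths in an edge hexagon.
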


\begin{proof}
If $\gamma$ travels along an edge of $\Gamma$, then the geodesic $\gamma_N$ must travel through the corresponding edge hexagon in $\frac{1}{N}S(N\Gamma)$. The segment of $\gamma_N$ which travels through this edge hexagon is at least as long as the corresponding edge in $\Gamma$. One can see this by projecting the segment onto the appropriate side of the hexagon. Summing up over the edges traversed by $\gamma$, we obtain the fact that
\[
\ell(\gamma_N) \geq \ell(\gamma).
\]

On the other hand, if we consider $\Gamma$ embedded as the spine of $\frac{1}{N}S(N\Gamma)$, then the curve $\gamma$ in the hyperbolic surface consists of geodesic segments along the edges of $\Gamma$. By the triangle inequality, the length of such a segment in $\frac{1}{N}S(N\Gamma)$ exceeds the length of the corresponding edge in $\Gamma$ by no more than twice the maximum rib length in $\frac{1}{N}S(N\Gamma)$. Summing up over the edges traversed by $\gamma$, we obtain the fact that
\[
\ell(\gamma_N) \leq \ell(\gamma) + \frac{2Er}{N}.
\]
Here, $E$ is the number of edges traversed by $\gamma$ and $r$ is the maximum rib length in $S(N\Gamma)$. However, from the observation previous to this lemma, $r$ converges to the finite value $\cosh^{-1}\left( \frac{1}{\sin \pi/d} \right)$, where $d$ is the maximum degree of a vertex in $\Gamma$. One then obtains the desired result by taking the $N \to \infty$ limit of the two-sided inequality
\[
\ell(\gamma) \leq \ell(\gamma_N) \leq \ell(\gamma) + \frac{2Er}{N}. \qedhere
\]
\end{proof}

In Section~\ref{asymptotic-form}, our focus will be on trivalent metric ribbon graphs --- in other words, those whose vertices all have degree three. For such a metric ribbon graph $\Gamma$, the angles between closed geodesics in $S(N\Gamma)$ are particularly simple to compute in the $N \to \infty$ limit.

\begin{lemma} \label{angle-lemma}
Let $\Gamma$ be a trivalent metric ribbon graph and consider two distinct closed geodesics in $\Gamma$. Suppose that the corresponding closed geodesics in $S(N\Gamma)$ intersect in an angle $\theta \leq \frac{\pi}{2}$. Then we have the equation
\[
\lim_{N \to \infty} \theta = 0.
\]
\end{lemma}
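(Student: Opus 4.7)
The plan is to locate the intersection point within the decomposition of $S(N\Gamma)$ into edge hexagons and vertex polygons, and in each case to exploit the limiting geometry established in the preceding lemmas. Write $\gamma_1^N,\gamma_2^N$ for the closed geodesics on $S(N\Gamma)$ corresponding to the two given closed geodesics on $\Gamma$, and let $p_N$ be an intersection point realising the angle $\theta$; since the rescaling $S(N\Gamma) \mapsto \tfrac{1}{N}S(N\Gamma)$ preserves angles, there is no loss in working directly in $S(N\Gamma)$. The point $p_N$ lies either in an edge hexagon or in a vertex polygon, and these two cases can be handled separately.

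If $p_N$ lies in an edge hexagon corresponding to an edge $e$, then both geodesic segments through $p_N$ must traverse the hexagon. This hexagon is long and thin: it extends a length of roughly $N\ell(e)$ in the direction of $e$, while its transverse width is controlled by the intercostal length (which tends to $0$ by Lemma~\ref{intercostal}) and the rib length (which remains uniformly bounded by the observation following Lemma~\ref{angle-convergence}). Applying the trirectangle formula already used in the proof of Lemma~\ref{intercostal} to a geodesic traversing such a long, thin strip forces each of $\gamma_1^N,\gamma_2^N$ to make an angle of order $1/\sinh(N\ell(e)/2)$ with $e$, and hence so does the angle $\theta$ between them.

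The principal case is when $p_N$ lies in a vertex polygon corresponding to a vertex $v$. Since $\Gamma$ is trivalent, Lemma~\ref{angle-convergence} together with the remark following it implies that this polygon converges to a regular ideal triangle, with its three intercostals collapsing to the three ideal vertices. Each segment $\gamma_i^N \cap (\text{vertex polygon})$ enters and exits through an unordered pair of intercostals and, in the limit, approaches the side of the ideal triangle joining the corresponding pair of ideal vertices. Crucially, at a trivalent vertex there are only three intercostals, so any two such pairs must share at least one element: in the limit the two segments are therefore either the same side of the ideal triangle, or two sides meeting at a common ideal vertex, and in both cases they meet at angle $0$. The main obstacle will be to promote this qualitative limiting statement into the quantitative claim that $\theta \to 0$ at the actual intersection point inside the finite polygon; I would handle this by working in $\mathbb{H}^2$ and bounding the angle directly via a trirectangle estimate localised near the shared intercostal, using once more the trigonometric formulas from Lemma~\ref{intercostal} to control how fast two geodesics approaching a common asymptotic direction can diverge.
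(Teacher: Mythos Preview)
Your vertex-polygon case is exactly the paper's argument: localise the intersection to a vertex hexagon, note that a closed geodesic must enter and exit through two of the three intercostals, observe that any two such pairs of intercostals share an element, and deduce that the angle collapses with that shared intercostal. The paper then carries out explicitly the quantitative step you defer, treating two configurations --- in one, it forms a geodesic triangle with one side $c$ along the shared intercostal and applies the hyperbolic cosine rule $\cos\theta=\coth a\coth b-\cosh c/(\sinh a\sinh b)$; in the other, both segments are squeezed toward the common ideal vertex as the intercostal length $\delta\to 0$.

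Your edge-hexagon case, however, is both superfluous and slightly off. The vertex polygons already tile $S(N\Gamma)$ --- cutting along intercostals alone decomposes the surface --- so every intersection point lies in some vertex hexagon and there is no separate case to handle. Moreover, the claim that a geodesic meeting an edge hexagon ``must traverse the hexagon'' is unjustified: a geodesic may enter and leave through ribs at the same end of the hexagon (staying entirely within one vertex polygon) without running its full length, and in that situation your thin-strip angle estimate does not apply. Drop this case and your outline coincides with the paper's proof.
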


\begin{proof}
Since $\Gamma$ is trivalent, each vertex polygon in $S(N\Gamma)$ is a right-angled hexagon. These hexagons have three alternating sides which are intercostals and three alternating sides which are boundary segments. If a closed geodesic enters a vertex polygon via one intercostal, it must exit via another. Therefore, the intersection of two closed geodesics must resemble one of the following two diagrams.

\begin{center}
\includegraphics{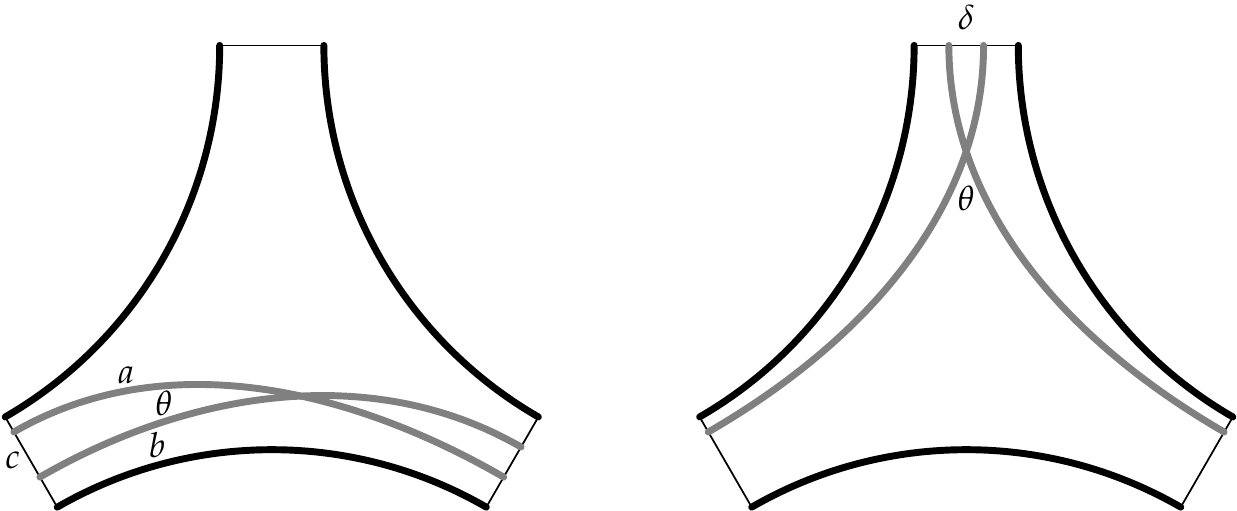}
\end{center}

In the diagram on the left, as $N$ approaches infinity, we may assume without loss of generality that the lengths denoted $a$ and $b$ also approach infinity. Since we know by Lemma~\ref{intercostal} that the length of an intercostal must converge to 0, so must the length denoted by $c$ in the diagram. The hyperbolic cosine rule states that $\cosh c = \cosh a \cosh b - \sinh a \sinh b \cos \theta$ or equivalently,
\[
\cos \theta = \coth a \coth b - \frac{\cosh c}{\sinh a \sinh b}.
\]
So in the $N \to \infty$ limit, we know that $\cos \theta$ converges to 1 and $\theta$ converges to 0.

In the diagram on the right, as $N$ approaches infinity, Lemma~\ref{intercostal} asserts that the length denoted by $\delta$ in the diagram converges to 0. Since the hexagon is right-angled, the two boundary segments adjacent to this intercostal limit to an ideal vertex. So in the $N \to \infty$ limit, we know that $\theta$ converges to 0.
\end{proof}

These results suggest that hyperbolic surfaces with large boundary lengths resemble metric ribbon graphs after appropriately scaling the hyperbolic metric. A precise statement of this fact can be made by making use of Gromov--Hausdorff convergence. Given a metric space $X$, let $\lambda X$ denote the same underlying set with the metric scaled by a positive real number $\lambda$. Theorem~\ref{gromov-hausdorff} states that, in the Gromov--Hausdorff topology, the following equation holds for every metric ribbon graph $\Gamma$.
\[
\lim_{N \to \infty} \frac{1}{N} S(N\Gamma)= \Gamma
\]

\begin{proof}[Proof of Theorem~\ref{gromov-hausdorff}]
We will require the notion of an $\epsilon$-GHA or, in other words, an $\epsilon$-Gromov--Hausdorff approximation. We say that a map $f: X \to Y$ between metric spaces is an $\epsilon$-GHA if $Y$ is contained in the $\epsilon$-neighbourhood of $f(X)$ and
\[
|d_X(x_1, x_2) - d_Y(f(x_1), f(x_2))| < \epsilon.
\]
This concept can be used to define a metric on the set of metric spaces which is well-known to be equivalent to the Gromov--Hausdorff metric.
\[
d(X, Y) = \inf~\{\epsilon > 0 \mid \text{there exist $\epsilon$-GHAs } f: X \to Y \text{ and } g: Y \to X\}
\]

Define the map $f: \Gamma \to \frac{1}{N} S(N\Gamma)$ by inclusion as the spine. From the same length estimates used in the proof of Lemma~\ref{length-lemma}, we obtain
\[
d_S(f(x_1), f(x_2)) \leq d_\Gamma(x_1, x_2) \leq d_S(f(x_1), f(x_2)) + \frac{2Er}{N},
\]
where $E$ is the number of edges traversed by the geodesic from $x_1$ to $x_2$ and $r$ is the maximum rib length in $S(N\Gamma)$. As observed earlier, $r$ converges to a finite value while $E$ is clearly bounded above by the number of edges in $\Gamma$. In addition, we know that the $\frac{r}{N}$-neighbourhood of $f(\Gamma)$ contains $\frac{1}{N}S(N\Gamma)$. Therefore, $f: \Gamma \to \frac{1}{N} S(N\Gamma)$ is an $\epsilon$-GHA where $\epsilon$ converges to 0 in the $N \to \infty$ limit.

Now define the map $g: \frac{1}{N} S(N\Gamma) \to \Gamma$ in the following way. For a point $x \in \frac{1}{N} S(N\Gamma)$, extend the shortest path from the boundary to $x$ until it meets the spine $\Gamma$ at $g(x)$. From the same length estimates used in the proof of Lemma~\ref{length-lemma}, we obtain
\[
d_\Gamma(g(x_1), g(x_2)) \leq d_S(x_1, x_2) \leq d_\Gamma(g(x_1), g(x_2)) + \frac{2Er}{N},
\]
where $E$ is the number of edges traversed by the image of the the geodesic from $x_1$ to $x_2$ and $r$ is the maximum rib length in $S(N\Gamma)$. Once again, $r$ converges to a finite value while $E$ is clearly bounded above by the number of edges in $\Gamma$. In addition, we know that the image of $g$ is precisely $\Gamma$. Therefore, $g: \frac{1}{N} S(N\Gamma) \to \Gamma$ is an $\epsilon$-GHA where $\epsilon$ converges to 0 in the $N \to \infty$ limit. It follows that $\frac{1}{N} S(N\Gamma)$ converges to $\Gamma$ in the Gromov--Hausdorff topology.
\end{proof}

\section{The asymptotic Weil--Petersson form} \label{asymptotic-form}

In order to study the asymptotic behaviour of the Weil--Petersson form, fix $\x$ and consider the map
\[
f: \mathcal{MRG}_{g,n}(\x) \to \mathcal{MRG}_{g,n}(N\x) \to {\mathcal M}_{g,n}(N\x).
\]
This homeomorphism of orbifolds is the composition of two maps --- the first scales the ribbon graph metric by $N$ while the second uses the Bowditch--Epstein construction. The normalised Weil--Petersson form $\frac{\omega}{N^2}$ on ${\mathcal M}_{g,n}(N\x)$ pulls back via $f$ to a symplectic form on the combinatorial moduli space. We will be interested in the asymptotic behaviour of this symplectic form.

Now fix a trivalent ribbon graph $\Gamma$ of type $(g,n)$ and consider the set ${\mathcal MRG}_\Gamma(\x) \subseteq {\mathcal MRG}_{g,n}(\x)$. This is an open orbifold cell in the cell decomposition for the combinatorial moduli space described in Section~\ref{background}. The ribbon graph $\Gamma$ has exactly $6g-6+3n$ edges, which we label from 1 up to $6g-6+3n$. The lengths of these edges $e_1, e_2, \ldots, e_{6g-6+3n}$ provide a set of natural coordinates on ${\mathcal MRG}_\Gamma(\x)$. One goal of this section is to prove that there exist constants $a_{ij}$ which depend on $\Gamma$ such that $\frac{f^*\omega}{N^2}$ converges to a 2-form on ${\mathcal MRG}_\Gamma(\x)$ of the form
\[
\lim_{N \to \infty} \frac{f^*\omega}{N^2} = \sum_{i < j} a_{ij} ~ de_i \wedge de_j.
\]

Although the Fenchel--Nielsen coordinates are canonical for the Weil--Petersson form, it is useful to instead consider local coordinates for the moduli space, each of which is the length function associated to some simple closed curve. The following result due to Wolpert \cite{wol1} asserts that the Weil--Petersson form has a reasonably straightforward description in such coordinates.

\begin{proposition} \label{wolpert}
Let $C_1, C_2, \ldots, C_{6g-6+2n}$ be distinct simple closed geodesics with lengths $\ell_1, \ell_2, \ldots, \ell_{6g-6+2n}$ in a hyperbolic surface with genus $g$ and $n$ cusps. If $C_i$ and $C_j$ meet at a point $p$, let $\theta_p$ denote the angle between the curves, measured anticlockwise from $C_i$ to $C_j$. Define the $(6g-6+2n) \times (6g-6+2n)$ skew-symmetric matrix $X$ by the formula
\[
X_{ij} = \sum_{p \in C_i \cap C_j} \cos \theta_p, \qquad \text{for } i < j.
\]
If $X$ is invertible, then $\ell_1, \ell_2, \ldots, \ell_{6g-6+2n}$ are local coordinates for the moduli space and the Weil--Petersson form is given by
\[
\omega = - \sum_{i < j}~[X^{-1}]_{ij}~d\ell_i \wedge d\ell_j.
\]
\end{proposition}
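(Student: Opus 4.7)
The plan is to deduce the formula from two classical theorems of Wolpert: the \emph{duality identity}, which says that the twist vector field $t_i$ generating the Fenchel--Nielsen twist along $C_i$ is, up to sign, the symplectic gradient of $\ell_i$ (so that $i_{t_i}\omega = \pm d\ell_i$); and the \emph{cosine formula}, which computes the pairing $\omega(t_i, t_j) = \sum_{p \in C_i \cap C_j} \cos\theta_p$, i.e.\ exactly $X_{ij}$. With these two inputs in hand, the proposition is a short linear algebra exercise, and the real care in the argument is tracking the sign convention so as to reproduce the minus sign appearing in the stated formula.

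First I would observe that duality together with the cosine formula gives $d\ell_j(t_i) = \omega(t_j, t_i) = -X_{ij}$. The matrix $X$ is therefore the pairing matrix between the collections $\{d\ell_i\}$ and $\{t_j\}$, so invertibility of $X$ forces both the covectors $d\ell_i$ and the vectors $t_j$ to be linearly independent at the relevant point. Since the moduli space has real dimension $6g-6+2n$, which matches the number of curves, it follows that $\ell_1, \ldots, \ell_{6g-6+2n}$ indeed define local coordinates in a neighbourhood, so that it makes sense to expand $\omega$ in the cobasis $\{d\ell_i\}$.

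In these coordinates, expand $t_i = \sum_k A_{ik}\,\partial/\partial \ell_k$ and write $\omega = \sum_{k<l} \Omega_{kl}\, d\ell_k \wedge d\ell_l$, with $\Omega$ extended skew-symmetrically. Evaluating $d\ell_j$ on $t_i$ in two ways gives $A_{ij} = -X_{ij}$, and the pairing $X_{ij} = \omega(t_i, t_j) = \sum_{k,l} A_{ik} A_{jl}\Omega_{kl}$ then reads $X = A\Omega A^T = X\Omega X^T$. Using $X^T = -X$ and inverting yields $\Omega = -X^{-1}$, which is exactly the claimed expression $\omega = -\sum_{i<j}[X^{-1}]_{ij}\,d\ell_i \wedge d\ell_j$.

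The main obstacle here is not the algebra, which is a few lines, but the bookkeeping: Wolpert's cosine and duality formulae appear in the literature with different conventions depending on how the twist flow is normalised and how the intersection angles are oriented. I would therefore verify the overall sign on a small test case -- for instance by checking against $\omega = \sum_k d\ell_k \wedge d\tau_k$ in Fenchel--Nielsen coordinates or by computing on a one-holed torus with two intersecting simple closed geodesics -- to confirm that the conventions chosen reproduce the minus sign in front of $X^{-1}$ rather than its opposite.
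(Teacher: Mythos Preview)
Your sketch is correct, and in fact the paper does not supply its own proof of this proposition at all: it is stated as a result due to Wolpert and cited to \cite{wol1}, with the remark that ``on closer inspection of Wolpert's original proof, this result extends without amendment to the case of hyperbolic surfaces with geodesic boundary.'' So there is no paper-internal argument to compare against.

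That said, your two ingredients --- the duality relation $\iota_{\tau_i}\omega = -d\ell_i$ and the cosine formula $\omega(\tau_i,\tau_j) = X_{ij}$ --- are exactly the facts the paper itself invokes later, in the proof of Theorem~\ref{equal-forms}, when it needs to compare $\Omega_K$ with $\Omega_L$. Your linear-algebra reduction (passing from $X = A\Omega A^T$ with $A = -X$ to $\Omega = -X^{-1}$) is the standard way to turn those two Wolpert identities into the coordinate expression for $\omega$, and is precisely how Wolpert's original argument proceeds. Your caveat about sign conventions is well placed; note that the paper adopts the convention $\iota_{\tau_i}\omega = -d\ell_i$, which fixes the sign you were worried about and yields the minus in front of $X^{-1}$.
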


On closer inspection of Wolpert's original proof, this result extends without amendment to the case of hyperbolic surfaces with geodesic boundary. By linearity, it also holds if $C_1, C_2, \ldots, C_{6g-6+2n}$ are simple geodesic multicurves or, in other words, finite unions of disjoint simple closed geodesics, each with a positive weight. In order to use this expression for the Weil--Petersson form, we require a natural system of multicurves to work with. We associate a multicurve $\widetilde{C}_k$ in $\Gamma$ to the edge labelled $k$ using the following convention.

\begin{itemize}
\item Case 1: If the edge labelled $k$ is adjacent to the two distinct faces labelled $i$ and $j$, then let $\widetilde{C}_k$ be the curve shown in bold in the diagram below left.

\item Case 2: If the edge labelled $k$ is adjacent to the face labelled $i$ on both sides, then let $\widetilde{C}_k$ be the union of the two curves shown in bold in the diagram below right.

\item Case 3: If the edge labelled $k$ is a loop, then let $\widetilde{C}_k$ be the empty curve.
\end{itemize}

\begin{center}
\includegraphics{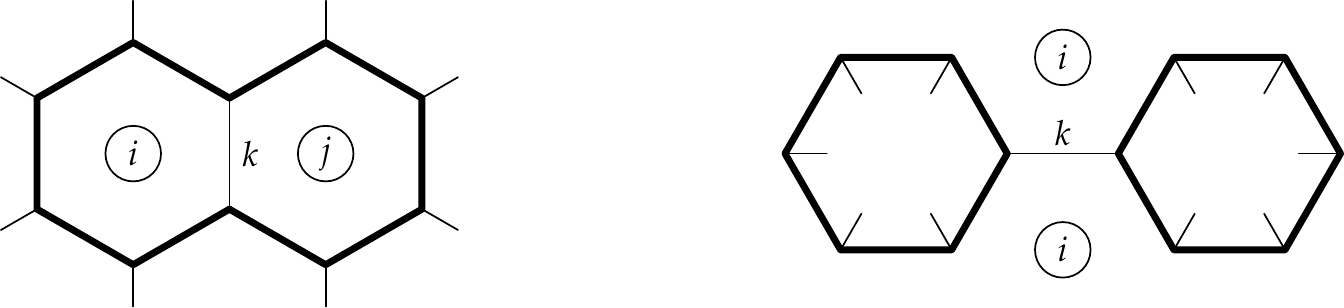}
\end{center}

Now given a trivalent metric ribbon graph $N\Gamma \in {\mathcal MRG}_\Gamma(N\x)$, the multicurve $\widetilde{C}_k$ on $\Gamma$ defines a simple geodesic multicurve $C_k$ on $S(N\Gamma)$.\footnote{We use $\Gamma$ to denote both the metric ribbon graph as well as its underlying ribbon graph --- hopefully, no confusion should arise from this abuse of notation.} If we denote its length by $\ell_k$, then we obtain length functions $\ell_1, \ell_2, \ldots, \ell_{6g-6+3n}$ which can be pulled back to ${\mathcal MRG}_\Gamma(\x)$. However, we will be more interested in the normalised length functions $\widehat{\ell}_1, \widehat{\ell}_2, \ldots, \widehat{\ell}_{6g-6+3n}$ defined by $\widehat{\ell}_k = \frac{\ell_k}{N}$. For a particular value of $N$, it is difficult to precisely relate this coordinate system with that given by the edge length functions $e_1, e_2, \ldots, e_{6g-6+3n}$. However, the picture is much simpler in the $N \to \infty$ limit, where we can use Lemma~\ref{length-lemma} to deduce the following.

\begin{itemize}
\item Case 1: If the edge labelled $k$ is adjacent to the two distinct faces labelled $i$ and $j$, then 
\[
\lim_{N \to \infty} \widehat{\ell}_k = x_i + x_j - 2e_k.
\]

\item Case 2: If the edge labelled $k$ is adjacent to the face labelled $i$ on both sides, then 
\[
\lim_{N \to \infty} \widehat{\ell}_k = x_i - 2e_k.
\]

\item Case 3: If the edge labelled $k$ is a loop, then 
\[
\lim_{N \to \infty} \widehat{\ell}_k = 0.
\]
\end{itemize}

The normalised length functions are known to be real analytic functions and, by the work of Wolpert \cite{wol3}, are also known to have bounded first and second derivatives. Therefore, we may interchange the order of limit and derivative to obtain
\[
\lim_{N \to \infty} d\widehat{\ell}_k = -2de_k, \qquad \text{ for } k = 1, 2, \ldots, 6g-6+3n.
\]

We now turn our attention to the asymptotic behaviour of the $(6g-6+3n) \times (6g-6+3n)$ skew-symmetric matrix defined by
\[
{X}_{ij} = \sum_{p \in C_i \cap C_j} \cos \theta_p, \qquad \text{for } i < j.
\]

Given the trivalent ribbon graph $\Gamma$, we define $B_{ij}$ --- the oriented adjacency between edge $i$ and edge $j$ --- to be 0 if the edges are not adjacent or equal, and according to the following convention otherwise.

\begin{center}
\includegraphics{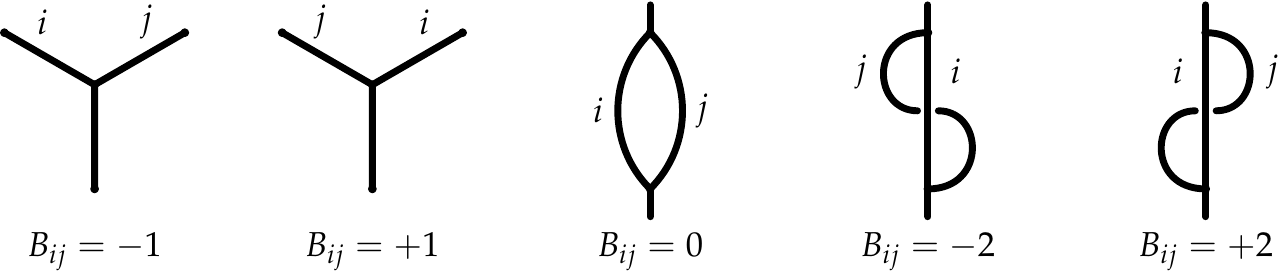}
\end{center}

This also defines a $(6g-6+3n) \times (6g-6+3n)$ skew-symmetric matrix. Note that $B$ is an integer matrix by construction while $X$ converges to an integer matrix in the $N \to \infty$ limit as a result of Lemma~\ref{angle-lemma}. In fact, these two matrices are related by the following result.

\begin{lemma} \label{matrix-convergence}
In the $N \to \infty$ limit, the matrix $X$ converges to $-2B$.
\end{lemma}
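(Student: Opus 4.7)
The plan is to localize the intersection analysis to individual vertex polygons of $S(N\Gamma)$ and use the asymptotic geometry already established in Section~\ref{long-boundaries}. Because the multicurve $\widetilde{C}_k$ is defined to live in a neighbourhood of the edge labelled $k$ and its endpoints, the geodesic realisation $C_k \subset S(N\Gamma)$ meets the interior of a vertex polygon only when the corresponding vertex is an endpoint of edge $k$. Consequently, if edges $i$ and $j$ share no vertex of $\Gamma$, then $C_i \cap C_j = \emptyset$ and $X_{ij} = 0$; on the other hand, the oriented adjacency $B_{ij}$ is also $0$ by definition. This disposes of the non-adjacent case.

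Next, suppose edges $i$ and $j$ share a vertex $v$. By Lemma~\ref{angle-convergence}, after scaling by $\frac{1}{N}$ the right-angled hexagonal vertex polygon at $v$ converges to a regular ideal hexagon, and the relevant segments of $C_i$ and $C_j$ asymptotically run between pairs of intercostals. Lemma~\ref{angle-lemma} then forces the angle $\theta_p$ at each intersection $p \in C_i \cap C_j$ inside this polygon to converge to $0$ or to $\pi$, so each contribution $\cos \theta_p$ converges to $+1$ or $-1$. Thus in the limit $X_{ij}$ becomes an integer and the problem reduces to counting intersections with appropriate signs in a combinatorial picture determined entirely by the cyclic order of the three edges at $v$.

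I would now perform a case analysis on the types (Case~1, 2, or 3) of edges $i$ and $j$ as prescribed in the definition of $\widetilde{C}_k$. In the generic situation, where both edges are of Case~1 type and the faces surrounding $v$ are distinct, the two multicurves each cut across the vertex polygon between a pair of intercostals, and together produce exactly two intersection points in the asymptotic hexagonal picture; the two limiting cosines agree in sign, and that common sign is precisely the sign assigned to $B_{ij}$ by the convention of diagram-8. This yields a net contribution of $-2B_{ij}$ at the vertex $v$. The Case~2 situation is handled in the same way, with the two components of $\widetilde{C}_k$ each passing through the polygon once and again producing the requisite two intersections, while Case~3 loops contribute nothing, consistent with $B_{ij} = 0$ whenever one of the indices corresponds to a loop. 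Summing over shared vertices (in general, at most two vertices, and at most one for a generic pair of edges) gives $\lim_{N \to \infty} X_{ij} = -2B_{ij}$.

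The main obstacle is the sign bookkeeping. The factor of $-2$ must come out cleanly from the two intersection points inside each vertex polygon, and the sign must reproduce the oriented adjacency convention of diagram-8 in every case. I would tabulate the asymptotic intersection pattern inside a single vertex polygon for each cyclic arrangement of the three incident edges, displaying the arcs of $\widetilde{C}_i$ and $\widetilde{C}_j$ and tracking orientations. Once this finite local check is carried out, the remaining verification is routine, since the multicurves interact with one another only through these localized pieces.
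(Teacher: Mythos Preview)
Your proof rests on a misunderstanding of the multicurves $\widetilde{C}_k$. You assert that $\widetilde{C}_k$ ``is defined to live in a neighbourhood of the edge labelled $k$ and its endpoints,'' and hence that $C_i \cap C_j = \emptyset$ whenever edges $i$ and $j$ share no vertex. That is not how $\widetilde{C}_k$ is defined. In Case~1, $\widetilde{C}_k$ is the simple closed curve going around the boundaries of \emph{both} faces adjacent to edge $k$, crossing over edge $k$; this is why its asymptotic length is $x_i + x_j - 2e_k$. Such a curve passes through \emph{every} vertex polygon on the boundary of those two faces, not only the two endpoints of edge $k$. In particular, if edges $i$ and $j$ are non-adjacent but lie on the boundary of a common face, the geodesics $C_i$ and $C_j$ do intersect --- the paper treats exactly this case and shows the two intersection points contribute with opposite signs, cancelling to give $X_{ij}=0$.

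Because of this, your proposed localisation to vertex polygons at the endpoints of edge $k$ cannot work as stated, and the subsequent case analysis (which presumes only shared vertices matter) is incomplete. The paper instead organises the computation around maximal common paths that $\widetilde{C}_i$ and $\widetilde{C}_j$ traverse together along the boundary of a shared face: each such path contributes $\pm 1$ to the limit of $X_{ij}$, depending on how the two curves separate at its ends. When edges $i$ and $j$ share a face but are not adjacent, two such paths arise with cancelling signs; when they are adjacent, the two contributions reinforce to give $\pm 2$, matching $-2B_{ij}$. To repair your argument you would have to track $C_k$ through all the vertex polygons around its two faces, at which point you are essentially reproducing the paper's shared-path analysis.
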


\begin{proof}
Suppose that the two curves $\widetilde{C}_i$ and $\widetilde{C}_j$ traverse a maximal path of consecutive edges in $\Gamma$. Then in the $N \to \infty$ limit, they will contribute $+1$ to $X_{ij}$ if the diagram resembles the following, $-1$ if the order of the curves is reversed, and 0 otherwise.

\begin{center}
\includegraphics{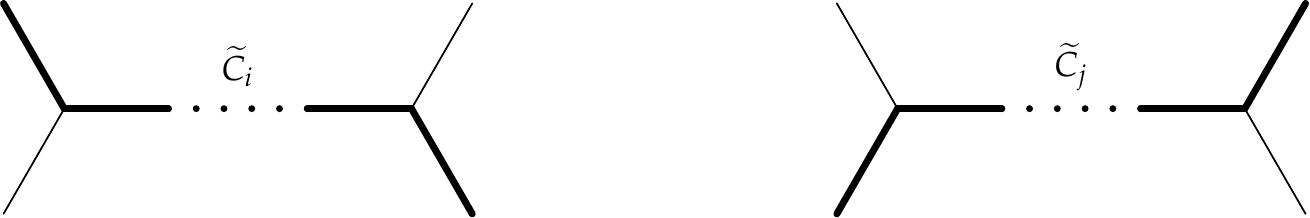}
\end{center}

It is clear that if edges $i$ and $j$ are not adjacent to a common face, then $C_i$ and $C_j$ do not intersect and $X_{ij} = 0$. Now suppose that edges $i$ and $j$ do share a common face, but are not adjacent. Then the schematic diagram below, combined with the previous observation, shows that $C_i$ and $C_j$ meet precisely twice. However, the two corresponding contributions to $X_{ij}$ have different signs, so ${X}_{ij} = 0$ in the $N \to \infty$ limit.

\begin{center}
\includegraphics{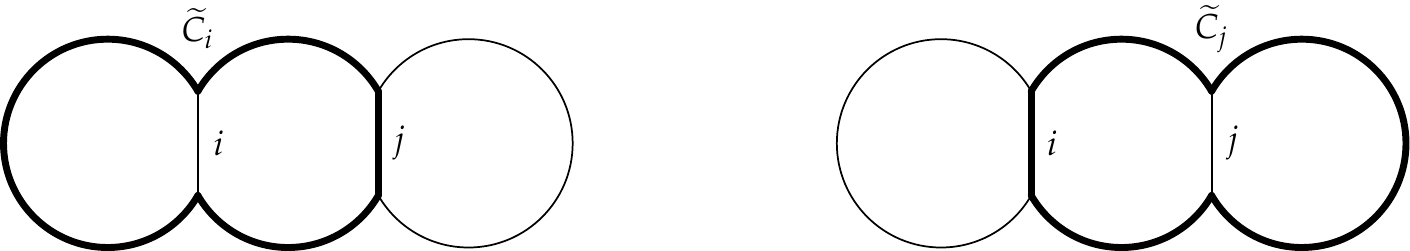}
\end{center}

Now suppose that the oriented adjacency between edges $i$ and $j$ is $-1$. Then the schematic diagram below, combined with our previous observation, shows that $C_i$ and $C_j$ meet precisely twice. The two corresponding contributions to $X_{ij}$ are positive, so ${X}_{ij} = 2$  in the $N \to \infty$ limit. The same argument can be used to prove that if the oriented adjacency between edges $i$ and $j$ is $+1$, then we have ${X}_{ij} = -2$ in the $N \to \infty$ limit.

\begin{center}
\includegraphics{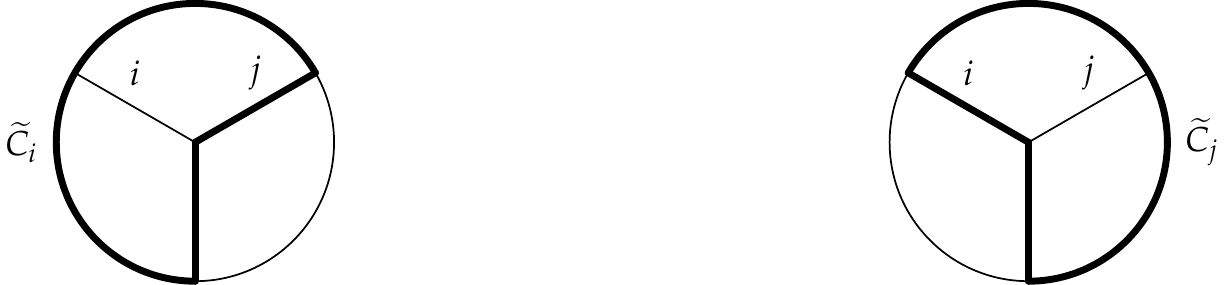}
\end{center}

The few additional cases which may arise include when the oriented adjacency is $\pm 2$ and when vertices, edges or faces in the diagrams above coincide. However, these may be handled in an entirely analogous manner which is not worthy of reproduction here.
\end{proof}

\begin{lemma} \label{matrix-rank}
The matrix $B$ has rank $6g-6+2n$.
\end{lemma}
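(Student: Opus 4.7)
The plan is to explicitly identify an $n$-dimensional subspace of $\ker B$ using face-boundary data, and then appeal to the non-degeneracy of Kontsevich's combinatorial $2$-form to show that this subspace is all of $\ker B$.

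For each face $F$ of $\Gamma$, define the face-boundary vector $v^F \in \mathbb{R}^{6g-6+3n}$ by letting $v^F_k$ be the number of sides of edge $k$ lying on $\partial F$ (so $v^F_k \in \{0,1,2\}$). I first verify $Bv^F = 0$ by a local computation at each endpoint of an edge $k$. At an endpoint $u$ with cyclic order $(k, j_1, j_2)$, the only non-zero contributions to $(Bv^F)_k = \sum_j B_{kj} v^F_j$ come from $j_1, j_2$, and after tracking the corner-faces one finds that the combined contribution from $u$ is $[F_L(k) = F] - [F_R(k) = F]$, where $F_L(k)$ and $F_R(k)$ denote the two faces on either side of $k$. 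At the other endpoint, the outgoing direction along $k$ is reversed, which swaps left and right, so the contribution there is the negative of the above, and the two cancel. Thus each $v^F$ lies in $\ker B$. The $v^{F_1},\ldots,v^{F_n}$ are moreover linearly independent: they are precisely the rows of the perimeter matrix $A_\Gamma$ in the cell description ${\mathcal MRG}_\Gamma(\x) = \{\mathbf{e}\in\mathbb{R}_+^{E(\Gamma)} \mid A_\Gamma\mathbf{e}=\x\}/\text{Aut}(\Gamma)$, so their linear independence is equivalent to this cell having the expected top dimension $6g-6+2n$ for generic $\x$. Together this yields $\text{rank}\,B \leq 6g-6+2n$.

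For the reverse inequality, observe that $B$ descends to a skew-symmetric form $\overline B$ on the $(6g-6+2n)$-dimensional quotient $W = \mathbb{R}^{6g-6+3n}/\text{span}\{v^{F_1},\ldots,v^{F_n}\}$, which can be canonically identified with the tangent space $\ker A_\Gamma$ of ${\mathcal MRG}_\Gamma(\x)$. A direct vertex-by-vertex comparison identifies $\overline B$, up to a non-zero scalar, with Kontsevich's combinatorial $2$-form $\Omega_K$ on ${\mathcal MRG}_\Gamma(\x)$. Since $\Omega_K$ is known to be a symplectic form on this space \cite{kon}, so is $\overline B$; hence $\ker B = \text{span}\{v^F\}$ exactly, and $\text{rank}\,B = 6g-6+2n$.

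The main obstacle is the last step: while the kernel lower bound is a local and essentially combinatorial check, matching $\overline B$ with $\Omega_K$ requires a careful vertex-by-vertex computation, and one must be cautious not to invoke Theorem~\ref{equal-forms} (proved using this very lemma).
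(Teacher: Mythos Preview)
Your first step---showing that the face-boundary vectors $v^F$ lie in $\ker B$---is the same computation as the paper's $B\circ A^t=0$, and together with independence of the $v^F$ it gives the upper bound $\operatorname{rank}B\le 6g-6+2n$. (The paper proves that independence by a direct local argument at a trivalent vertex rather than by appealing to the cell dimension, but either is fine.)

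The gap is in the lower bound. You assert that the induced form $\overline B$ on $W=\mathbb{R}^{E(\Gamma)}/\operatorname{span}\{v^F\}$ coincides with $\Omega_K$ up to a non-zero scalar, via a ``vertex-by-vertex comparison''. This is not correct. The matrix $B$ records oriented adjacency of edges at \emph{vertices}, which is the same data as \emph{consecutive} pairs of edges around each face, whereas $K$ records the ordering of \emph{all} pairs of edges around each face. For a single face with $m$ edges, the consecutive-pairs skew form and the all-pairs skew form restrict to the perimeter hyperplane with a ratio that depends on $m$ (for instance $3$ when $m=3$ and $2$ when $m=4$), so summing over faces of varying sizes cannot produce a global scalar. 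The actual relationship is that $B$ is the Poisson bivector inverse to $\Omega_K$: one has $\iota_{T_i}\Omega_K=-2\,de_i$ on $\ker A_\Gamma$, with $T_i=\sum_j B_{ij}\,\partial/\partial e_j$. Combining this identity with the non-degeneracy of $\Omega_K$ does show that the rows $T_i$ span $\ker A_\Gamma$ and hence $\operatorname{rank}B=6g-6+2n$, but that is a different argument from the one you outline, and it still outsources the key step to Kontsevich's non-degeneracy result.

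The paper's proof avoids $\Omega_K$ entirely. For the reverse inclusion $\ker B\subseteq\operatorname{im}A^t$ it observes that if $\sum a_kE_k\in\ker B$, then around each face $F$ the quantity $\tfrac12(a_i+a_{i+1}-\overline a_i)$ is constant as one moves along the boundary; calling this constant $b_F$, one checks $A^t(\sum b_FF)=\sum a_kE_k$. This is elementary, local, and self-contained.
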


\begin{proof}
We consider $\mathbb{R}^{E(\Gamma)}$ to be the real vector space with basis $\{E_1, E_2, \ldots, E_{6g-6+3n}\}$, the set of edges of $\Gamma$. Then the matrix $B$ represents the linear map $\mathbb{R}^{E(\Gamma)} \to \mathbb{R}^{E(\Gamma)}$ which describes the oriented adjacency between edges in $\Gamma$.

We also consider $\mathbb{R}^n$ to be the real vector space with basis $\{F_1, F_2, \ldots, F_n\}$, the set of faces in the cell decomposition corresponding to $\Gamma$. In Section~\ref{background}, we defined $A_\Gamma: \mathbb{R}^{E(\Gamma)} \to \mathbb{R}^n$ to be the linear map which represents adjacency between the faces and edges of $\Gamma$. The transpose $A^t: \mathbb{R}^n \to \mathbb{R}^{E(\Gamma)}$ is the linear map which sends a face to the sum of the edges adjacent to that face, counted with multiplicity. We will show that the composition of these two linear maps satisfies $B \circ A^t = 0$.

Suppose that $F$ is a face which is adjacent to the $m$ not necessarily distinct edges $E_1, E_2, \ldots, E_m$, as shown in the diagram below. Furthermore, suppose that the edges $E_k$ and $E_{k+1}$ are also adjacent to the edge $\overline{E}_k$, where the subscripts are taken modulo $m$. The following calculation shows that $B \circ A^t = 0$ holds on a basis for $\mathbb{R}^n$, so we have $\text{im}~A^t \subseteq \text{ker}~B$.

\begin{multicols}{2}
\begin{center}
\includegraphics{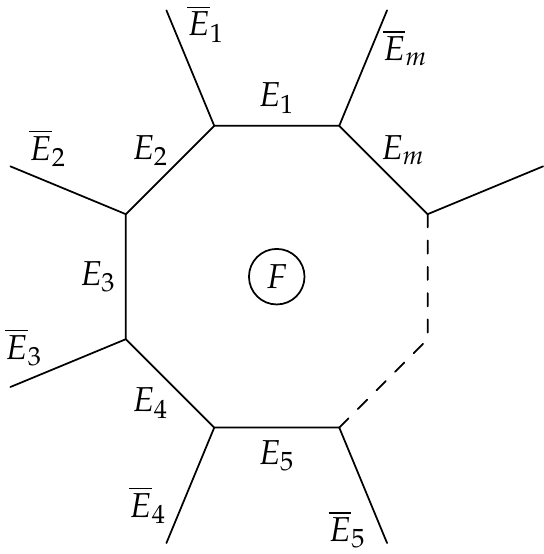}
\end{center}

\begin{align*}
&~B \circ A^t(F) \\
=&~B(E_1 + E_2 + \cdots + E_m) \\
=&~\sum B(E_k) \\
=&~\sum (E_{k-1} - \overline{E}_{k-1} + \overline{E}_{k} - E_{k+1}) \\
=&~\sum E_{k-1} - \sum \overline{E}_{k-1} + \sum \overline{E}_{k} - \sum E_{k+1} \\
=&~0
\end{align*}
\end{multicols}

Now if $\sum a_k E_k \in \ker B$, then $a_{i-1} + a_i - \overline{a}_{i-1} = a_i + a_{i+1} - \overline{a}_i$ for all $i$. So to the face $F$, we can associate the well-defined value
\[
b = \frac{a_1 + a_2 - \overline{a}_1}{2} = \frac{a_2 + a_3 - \overline{a}_2}{2} = \cdots = \frac{a_m + a_1 - \overline{a}_m}{2}.
\]
Since $B(\sum b_k F_k) = \sum a_k E_k$, we have $\ker B \subseteq \text{im}~A^t$. In particular, we have established that $\ker B = \text{im}~A^t$.

Now if $\sum b_k F_k \in \ker A^T$ and $F_i$ and $F_j$ are adjacent faces, then $b_i + b_j = 0$. So if the faces $F_i, F_j, F_k$ meet at a vertex, then $b_i + b_j = b_j + b_k = b_k + b_i = 0$, which implies that $b_i = b_j = b_k = 0$. Since $\Gamma$ is connected, we  deduce that $\text{ker}~A^t = 0$. Now invoke the rank--nullity theorem to conclude that $\dim(\text{im}~B) = 6g-6+2n$.
\end{proof}

The previous lemma asserts that we may relabel the edges of $\Gamma$ in such a way that the matrix $\widehat{B}$ formed by taking the first $6g-6+2n$ rows and $6g-6+2n$ columns of $B$ is invertible. The matrix $\widehat{X}$ is defined analogously from the matrix $X$.

We are now in a position to prove Theorem~\ref{equal-forms} which states that, in the $N \to \infty$ limit, the symplectic form $\frac{f^*\omega}{N^2}$ converges pointwise to a piecewise linear 2-form $\Omega_L$ on the locus of trivalent metric ribbon graphs in ${\mathcal MRG}_{g,n}(\x)$. Furthermore, this coincides with the piecewise linear 2-form $\Omega_K$ introduced by Kontsevich in his proof of the Witten--Kontsevich theorem \cite{kon}.

\begin{proof}[Proof of Theorem~\ref{equal-forms}]
We have shown that in the $N \to \infty$ limit, $X = -2B$ and $d\widehat{\ell}_k = -2de_k$. Now use Proposition~\ref{wolpert} to obtain
\[
\lim_{N \to \infty} \frac{f^*\omega}{N^2} = - \lim_{N \to \infty} \sum_{i < j}~[\widehat{X}^{-1}]_{ij}~d\widehat{\ell}_i \wedge d\widehat{\ell}_j = 2 \sum_{i < j}~[\widehat{B}^{-1}]_{ij}~de_i \wedge de_j.
\]
So for every trivalent ribbon graph $\Gamma$, the symplectic form $\frac{f^*\omega}{N^2}$ converges pointwise on ${\mathcal MRG}_\Gamma(\x)$. Furthermore, the limiting 2-form has constant coefficients with respect to the local coordinates $e_1, e_2, \ldots, e_{6g-6+3n}$. Therefore, the asymptotic Weil--Petersson form $\Omega_L$ is a piecewise linear 2-form on the locus of trivalent metric ribbon graphs in ${\mathcal MRG}_{g,n}(\x)$.

Kontsevich defines a piecewise linear 2-form $\Omega_K$ on the locus of trivalent metric ribbon graphs in ${\mathcal MRG}_{g,n}(\x)$ in the following way. For each face of $\Gamma$, choose one distinguished edge adjacent to that face. This allows us to turn the cyclic ordering of the edges around a face into a total ordering by declaring the distinguished edge to be first. Now define the matrix $K$ according to the following rule.
\[
K_{ij} = \sum_{\text{faces}} \left\{ \begin{array}{cl} +1 & \text{if edge $i$ comes before edge $j$} \\ -1 & \text{if edge $i$ comes after edge $j$} \\ 0 & \text{if edge $i$ or edge $j$ is not adjacent to the face} \end{array} \right.
\]
Since $K$ is skew-symmetric, it can be used to define the 2-form
\[
\Omega_K = \frac{1}{4} \sum_{i < j} K_{ij}~de_i \wedge de_j.
\]
One can prove that $\Omega_K$ is well-defined and non-degenerate on ${\mathcal MRG}_\Gamma(\x)$ for a trivalent ribbon graph $\Gamma$. We define a vector field on ${\mathcal MRG}_\Gamma(\x)$ corresponding to edge $i$ by
\[
T_i = \sum_j B_{ij}~\frac{\partial}{\partial e_j}.
\]
Then a straightforward computation shows that
\[
\iota_{T_i} \Omega_K = -2d e_i \qquad \text{and} \qquad \Omega_K(T_i, T_j) = -2B_{ij}.
\]

On the other hand, results of Wolpert \cite{wol1} assert that on ${\mathcal M}_{g,n}(N\x)$, the Fenchel--Nielsen coordinates satisfy
\[
\iota_{\tau_i} \omega = -d \ell_i \qquad \text{and} \qquad \omega(\tau_i, \tau_j) = X_{ij}.
\]
Using the fact that $X = -2B$ and $d\widehat{\ell}_k = -2de_k$ in the $N \to \infty$ limit, it follows that $\Omega_K$ and $\Omega_L$ are equal on the locus of trivalent metric ribbon graphs in ${\mathcal MRG}_{g,n}(\x)$.
\end{proof}

An alternative proof of this result appears in the work of Mondello \cite{mon2}. Among other differences, Mondello uses Penner coordinates to analyse the Weil--Petersson Poisson structure on Teichm\"{u}ller space and produces Theorem~\ref{equal-forms} as a byproduct. It seems that our choice of coordinates is more well-suited for the purpose of analysing the asymptotic behaviour of the Weil--Petersson symplectic form. We believe that it offers a more intuitive and less computational proof of Theorem~\ref{equal-forms}.

\section{Applications to intersection theory on moduli spaces of curves} \label{applications}

\subsection{Witten--Kontsevich theorem}

One of the landmark results concerning intersection theory on moduli spaces of curves is the Witten--Kontsevich theorem, which governs psi-class intersection numbers
\[
\int_{\M_{g,n}} \psi_1^{\alpha_1} \psi_2^{\alpha_2} \ldots \psi_n^{\alpha_n} \in \mathbb{Q}.
\]
In his foundational paper \cite{wit}, Witten conjectured that a particular generating function for these numbers is a tau function for the Korteweg--de Vries integrable hierarchy. This gives an effective recursion for calculating all psi-class intersection numbers. An equivalent formulation of the conjecture states that the generating function for psi-class intersection numbers satisfies certain Virasoro constraints.

Kontsevich's proof \cite{kon} of Witten's conjecture uses combinatorial polygon bundles over the combinatorial moduli space in order to represent the psi-classes. He writes down the 2-form $\Omega_K$ and determines the corresponding volume of the combinatorial moduli space. Taking the Laplace transform of the Kontsevich volume results in the combinatorial formula stated as Theorem~\ref{kcf}. This identity reduces the calculation of psi-class intersection numbers on $\M_{g,n}$ to a certain weighted enumeration of trivalent ribbon graphs of type $(g,n)$. From this point, Kontsevich introduced a matrix integral to handle the ribbon graph enumeration and the known relationship between matrix integrals and integrable hierarchies allowed him to deduce Witten's conjecture.

More recently, Mirzakhani \cite{mir1, mir2} has found an alternative proof of the Witten--Kontsevich theorem by calculating volumes of moduli spaces of hyperbolic surfaces with respect to the Weil--Petersson form.
\[
V_{g,n}(\LL) = \int_{{\mathcal M}_{g,n}(\LL)} \frac{\omega^{3g-3+n}}{(3g-3+n)!}
\]
She uses a generalisation of McShane's identity concerning the length spectrum of a hyperbolic surface to produce a method for integrating over moduli spaces of hyperbolic surfaces. In particular, she is able to write down a recursive formula for the Weil--Petersson volumes. On the other hand, Mirzakhani employs the method of symplectic reduction to generalise Proposition~\ref{de-rham} in the following way.

\begin{proposition} \label{mirzakhani}
The Weil--Petersson form $\omega$ on the moduli space of hyperbolic surfaces ${\mathcal M}_{g,n}(\LL)$ induces the de~Rham cohomology class
\[
[\omega] = 2\pi^2 \kappa_1 + \frac{1}{2}L_1^2 \psi_1 + \frac{1}{2}L_2^2 \psi_2 + \cdots + \frac{1}{2}L_n^2 \psi_n \in H^2(\M_{g,n}; \mathbb{R}).
\]
\end{proposition}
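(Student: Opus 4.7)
The plan is to prove Proposition~\ref{mirzakhani} by a symplectic reduction / Duistermaat--Heckman argument, taking Wolpert's formula (Proposition~\ref{de-rham}) as the known case $\LL = \mathbf 0$. The underlying idea is to exhibit the family $\{\mathcal{M}_{g,n}(\LL)\}_{\LL}$ as the family of symplectic quotients of a single ambient symplectic orbifold by a common Hamiltonian $(S^1)^n$-action, whose moment map assumes the value $\bigl(\tfrac12 L_1^2,\ldots,\tfrac12 L_n^2\bigr)$ over the reduction giving $\mathcal{M}_{g,n}(\LL)$.

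First I would introduce the auxiliary moduli space $\widetilde{\mathcal M}_{g,n}(\LL)$ parametrising hyperbolic surfaces of type $(g,n)$ together with a marked point on each geodesic boundary component. Rotating the marked point around the $k$-th boundary gives an $S^1$-action, and these assemble into an $(S^1)^n$-action realising $\widetilde{\mathcal M}_{g,n}(\LL) \to \mathcal{M}_{g,n}(\LL)$ as a principal $(S^1)^n$-bundle. Allowing the $L_k$ to vary, including the degeneration $L_k \to 0$ in which the $k$-th boundary shrinks to a cusp and the disk compactification absorbs the marked point, I would assemble the $\widetilde{\mathcal M}_{g,n}(\LL)$ into a single orbifold $\widetilde{\mathcal M}_{g,n}$ equipped with an extended Weil--Petersson form $\widetilde\omega$. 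A Fenchel--Nielsen calculation with twist coordinates along curves parallel to the boundary should then show that the $(S^1)^n$-action is Hamiltonian for $\widetilde\omega$, with $k$-th moment map component exactly $\tfrac12 L_k^2$.

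Once the Hamiltonian structure is in place, the Duistermaat--Heckman theorem yields the variation formula
\[
[\omega_\LL] \;=\; [\omega_{\mathbf 0}] \;+\; \tfrac12 \sum_{k=1}^n L_k^2\, c_1(\mathcal L_k) \qquad \text{in } H^2(\M_{g,n}; \mathbb R),
\]
where $\mathcal L_k$ is the complex line bundle associated to the $k$-th factor of the principal $(S^1)^n$-bundle. Identifying $\mathcal L_k$ with the $k$-th tautological cotangent line on $\M_{g,n}$ --- infinitesimally, rotating a marked point on a boundary component that is degenerating to a cusp is rotation of the cotangent line at the resulting marked point --- gives $c_1(\mathcal L_k) = \psi_k$. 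Substituting Wolpert's identity $[\omega_{\mathbf 0}] = 2\pi^2 \kappa_1$ for the constant term then completes the proof.

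The main obstacle is the construction of the ambient object $\widetilde{\mathcal M}_{g,n}$ together with its extended symplectic form, and in particular making precise the degeneration at $L_k = 0$ so that the $S^1$-action extends smoothly and Duistermaat--Heckman applies: one must rigorously verify that attaching a disk of area $\tfrac12 L_k^2$ at each boundary component yields a family of symplectic orbifolds for which boundary rotation is the Hamiltonian flow of the disk area. By comparison the identification $c_1(\mathcal L_k) = \psi_k$ is routine, as both are first Chern classes of the natural cotangent line at the $k$-th marked point, but it still requires matching the circle-bundle structure at the cusp limit with the algebro-geometric definition of the psi-class from Section~\ref{background}.
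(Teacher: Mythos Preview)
The paper does not actually prove this proposition: it is stated as a result of Mirzakhani, obtained by symplectic reduction, and cited to \cite{mir2} without further argument. So there is no ``paper's own proof'' to compare against.

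That said, your sketch is precisely the argument Mirzakhani gives. The key points --- the principal $(S^1)^n$-bundle $\widetilde{\mathcal M}_{g,n}(\LL)\to\mathcal M_{g,n}(\LL)$ obtained by marking a point on each boundary, the Hamiltonian nature of boundary rotation with moment map $\tfrac12 L_k^2$ (coming from the Fenchel--Nielsen computation that the angle variable dual to $\tfrac12\ell_k^2$ is the normalised twist $\tau_k/\ell_k$), the Duistermaat--Heckman variation formula, and the identification of the associated circle bundle with the tautological cotangent line so that its first Chern class is $\psi_k$ --- are exactly those in her paper. You have also correctly located the genuine technical issue: extending the family across $L_k=0$ so that Duistermaat--Heckman applies and so that the circle bundle at the cusp limit is honestly the unit cotangent circle. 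Mirzakhani handles this by working on the level of Teichm\"uller space and invoking the normal form theorem for Hamiltonian torus actions near the fixed locus; your description in terms of attaching a disk of area $\tfrac12 L_k^2$ is the correct heuristic but would need that normal-form analysis to be made rigorous.
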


A direct corollary of this result is the fact that the Weil--Petersson volume of ${\mathcal M}_{g,n}(\LL)$ is given by the expression
\[
V_{g,n}(\LL) = \sum_{|\text{\boldmath$\alpha$}| + m = 3g-3+n} \frac{(2\pi^2)^m \int_{\M_{g,n}} \psi_1^{\alpha_1} \psi_2^{\alpha_2} \cdots \psi_n^{\alpha_n} \kappa_1^m}{2^{|\text{\boldmath$\alpha$}|} \alpha_1! \alpha_2! \cdots \alpha_n! m!} L_1^{2\alpha_1} L_2^{2\alpha_2} \cdots L_n^{2\alpha_n}.
\]
Combining her volume calculation with this result yields a recursive formula for the psi-class intersection numbers. Thus, Mirzakhani is able to deduce the Witten--Kontsevich theorem. Her proof is particularly striking since it directly verifies the Virasoro constraints, completely bypasses the theory of matrix integrals, and uses hyperbolic geometry in a fundamental way.

Theorem~\ref{kcf}, which we refer to as Kontsevich's combinatorial formula, states that for the moduli space of curves $\M_{g,n}$, we have the following equality of rational polynomials in $s_1, s_2, \ldots, s_n$.
\[
\sum_{|\text{\boldmath$\alpha$}| = 3g-3+n} \int_{\M_{g,n}} \psi_1^{\alpha_1} \psi_2^{\alpha_2} \cdots \psi_n^{\alpha_n} ~ \prod_{k=1}^n \frac{(2\alpha_k - 1)!!}{s_k^{2\alpha_k+1}} = \sum_{\Gamma} \frac{2^{2g-2+n}}{|\text{Aut}(\Gamma)|} \prod_{e \in E(\Gamma)} \frac{1}{s_{\ell(e)} + s_{r(e)}}
\]
Here, we use the notation $|\text{\boldmath$\alpha$}|$ as a shorthand for $\alpha_1 + \alpha_2 + \cdots + \alpha_n$. The sum on the right hand side is over the trivalent ribbon graphs of type $(g,n)$. We write $\text{Aut}(\Gamma)$ and $E(\Gamma)$ for the automorphism group and edge set of $\Gamma$, respectively. For an edge $e$, the expressions $\ell(e)$ and $r(e)$ denote the labels of the faces on its left and right.

\begin{proof}[Proof of Theorem~\ref{kcf}]
By Proposition~\ref{mirzakhani}, we can write the asymptotics of the Weil--Peterson volume as
\[
\lim_{N \to \infty} \frac{V_{g,n}(N\x)}{N^{6g-6+2n}} = \sum_{|\text{\boldmath$\alpha$}| = 3g-3+n} \frac{\int_{\M_{g,n}} \psi_1^{\alpha_1} \psi_2^{\alpha_2} \cdots \psi_n^{\alpha_n}}{2^{3g-3+n} \alpha_1! \alpha_2! \cdots \alpha_n!} x_1^{2\alpha_1} x_2^{2\alpha_2} \cdots x_n^{2\alpha_n}.
\]

We may alternatively express the asymptotics of the Weil--Peterson volume in the following way.
\begin{align*}
\lim_{N \to \infty} \frac{V_{g,n}(N\x)}{N^{6g-6+2n}} &= \frac{1}{(3g-3+n)!}\lim_{N \to \infty} \int_{{\mathcal M}_{g,n}(N\x)} \left( \frac{\omega}{N^2} \right)^{3g-3+n} \\
&= \frac{1}{(3g-3+n)!} \int_{{\mathcal MRG}_{g,n}(\x)} \left( \lim_{N \to \infty} \frac{f^*\omega}{N^2} \right)^{3g-3+n} \\
&= \int_{{\mathcal MRG}_{g,n}(\x)} \frac{\Omega_K^{3g-3+n}}{(3g-3+n)!} 
\end{align*}
To obtain the second line from the first, we invoke the Lebesgue dominated convergence theorem to move the limit inside the integral. And to obtain the third line from the second, we have used Theorem~\ref{equal-forms}.

After equating these two expressions and taking the Laplace transform, we arrive at the following equation.
\[
\sum_{|\text{\boldmath$\alpha$}| = 3g-3+n} \int_{\M_{g,n}} \psi_1^{\alpha_1} \psi_2^{\alpha_2} \cdots \psi_n^{\alpha_n} ~ \prod_{k=1}^n \frac{(2\alpha_k-1)!!}{s_k^{2\alpha_k+1}} = 2^{3g-3+n} ~ {\mathcal L} \left\{\int_{{\mathcal MRG}_{g,n}(\x)} \frac{\Omega_K^{3g-3+n}}{(3g-3+n)!} \right\}.
\]
The left hand side coincides with that of Kontsevich's combinatorial formula. The right hand side naturally splits as a sum, since the cell decomposition for ${\mathcal MRG}_{g,n}(\x)$ possesses one top-dimensional cell for each trivalent ribbon graph of type $(g,n)$. Using the edge lengths as local coordinates, we find that each top-dimensional cell is naturally a convex polytope inside which the volume form is constant. By explicitly performing the volume calculation, we obtain Kontsevich's combinatorial formula. We refer the reader to Kontsevich's original paper \cite{kon} for the precise details of the volume calculation.
\end{proof}

We have shown that the Kontsevich volumes arise as the asymptotics of the Weil--Petersson volumes. This provides a new proof of the Witten--Kontsevich theorem and makes explicit the connection between the work of Kontsevich \cite{kon} and Mirzakhani \cite{mir1, mir2}. The strength of our approach lies in the fact that it avoids the complications inherent in the groundbreaking work of Kontsevich --- namely, the non-standard compactification of the moduli space of curves and the justification of how psi-classes arise from integrating over the combinatorial moduli space.

\subsection{A recursive formula for Kontsevich volumes}

There is further mileage to be gained from the viewpoint that Kontsevich volumes arise as the asymptotics of the Weil--Petersson volumes. For example, one expects the existence of a recursive formula for the Kontsevich volumes whose proof is modelled on  Mirzakhani's calculation of the Weil--Petersson volumes --- indeed, such a recursion appears in \cite{ben-coc-saf-wos}. Since their viewpoint is similar to ours and their results complementary, it is apt to include their main theorem here for comparison. Consider the Kontsevich volume multiplied by the product of the perimeters
\[
W_{g,n}(\LL) = L_1 L_2 \cdots L_n \int_{\mathcal{MRG}_{g,n}(\LL)} \frac{\Omega_K^{3g-3+n}}{(3g-3+n)!}.
\]

\begin{theorem} \label{recursion} Let $S = \{1, 2, \ldots, n\}$ and for an index set $I = \{i_1, i_2, \ldots i_k\}$, let $\LL_I = (L_{i_1}, L_{i_2}, \ldots, L_{i_k})$. Then we have the following recursive formula for Kontsevich volumes.
\begin{align*}
& W_{g,n+1}(L_0, \LL_S) = \sum_{k=1}^n L_k \left[ \int_0^{L_0-L_k} (L_0-x) W_{g,n}(x, \LL_{S \setminus \{k\}})~dx + \int_{L_0-L_k}^{L_0+L_k} \frac{L_0+L_k-x}{2} W_{g,n}(x, \LL_{S \setminus \{k\}})~dx \right] \\
& + \iint_{0 \leq x+y \leq L_0} \frac{L_0-x-y}{2} \left[ W_{g-1,n+1}(x, y, \LL_S) + \sum_{g_1 + g_2 = g} \sum_{I_1 \sqcup I_2 = S} W_{g_1,|I_1|+1}(x, \LL_{I_1}) W_{g_2,|I_2|+1}(y, \LL_{I_2}) \right] dx~dy
\end{align*}
\end{theorem}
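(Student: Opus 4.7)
The plan is to derive the claimed recursion as the $N \to \infty$ asymptotic limit of Mirzakhani's recursive formula for the Weil--Petersson volumes $V_{g,n+1}(\LL)$. The bridge is the identity implicit in the proof of Theorem~\ref{kcf}, which after multiplying through by every perimeter becomes
\[
W_{g,n+1}(L_0, \LL_S) \;=\; \lim_{N \to \infty}\, \frac{(NL_0)(NL_1)\cdots(NL_n)\, V_{g,n+1}(NL_0, N\LL_S)}{N^{6g-3+3n}}.
\]
It therefore suffices to apply this scaling-and-limit procedure to both sides of Mirzakhani's recursion and check that the right-hand side reassembles into the right-hand side of Theorem~\ref{recursion}.

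First I would record Mirzakhani's recursion after multiplying through by the product $L_0 L_1 \cdots L_n$ of all perimeters. The right-hand side decomposes as a sum over $k \in S$, the $k$-th term being an integral of $L_k\, V_{g,n}(x, \LL_{S \setminus \{k\}})$ against one of Mirzakhani's kernels $\mathcal{R}(L_0, L_k, x)$, together with a double integral of the handle-attaching contribution $V_{g-1,n+1}(x, y, \LL_S)$ and the splitting sum $\sum V_{g_1,|I_1|+1}(x, \LL_{I_1})\, V_{g_2,|I_2|+1}(y, \LL_{I_2})$ against the other kernel $\mathcal{D}(L_0, x, y)$. I would then rescale every perimeter by $N$, change integration variables $x \mapsto Nx$ and $y \mapsto Ny$, and divide through by $N^{6g-3+3n}$. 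Under this operation the left-hand side converges to $W_{g,n+1}(L_0, \LL_S)$, and each product of $V$ factors on the right-hand side converges to the corresponding product of $W$ factors; the remaining task is to identify the asymptotics of the kernels.

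The key asymptotic computation is to evaluate $\lim_{N \to \infty} \tfrac{1}{N} \mathcal{D}(NL_0, Nx, Ny)$ and $\lim_{N \to \infty} \tfrac{1}{N} \mathcal{R}(NL_0, NL_k, Nx)$. Both kernels are elementary combinations of expressions of the form $\log(1 + e^{t/2})$ and $\log(\cosh(s/2) + \cosh(t/2))$, to which one applies the basic asymptotics $\tfrac{1}{N}\log(1 + e^{Nt/2}) \to \tfrac{1}{2}\max(t, 0)$ and $\tfrac{1}{N}\log(\cosh(Ns/2) + \cosh(Nt/2)) \to \tfrac{1}{2}\max(|s|, |t|)$. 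A routine case analysis in the positive orthant of the integration variables then produces piecewise linear limits which match, on the nose, the kernel $\tfrac{1}{2}(L_0 - x - y)$ on $\{0 \leq x + y \leq L_0\}$ and the two-piece kernel equal to $(L_0 - x)$ on $[0, L_0 - L_k]$ and $\tfrac{1}{2}(L_0 + L_k - x)$ on $[L_0 - L_k, L_0 + L_k]$ which appear in the theorem.

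The principal technical obstacle is justifying the interchange of limit and integral. Two facts make this routine: Mirzakhani's volume polynomial theorem says $V_{g,n}$ is a polynomial of bounded degree in the squared perimeters, so after rescaling each integrand grows only polynomially in the integration variables; and Mirzakhani's kernels $\mathcal{D}$ and $\mathcal{R}$ decay exponentially fast outside the support of their piecewise linear limits, uniformly in $N$ after the rescaling. These two properties supply a common integrable dominating function, and Lebesgue's dominated convergence theorem applies. Collecting the pieces then yields Theorem~\ref{recursion}.
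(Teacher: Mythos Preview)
The paper does not supply its own proof of Theorem~\ref{recursion}. The result is quoted from \cite{ben-coc-saf-wos} and included, in the paper's words, ``for comparison''; no argument is given beyond the remark that ``one expects the existence of a recursive formula for the Kontsevich volumes whose proof is modelled on Mirzakhani's calculation of the Weil--Petersson volumes''. So there is no in-paper proof to compare your proposal against.

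That said, your proposal is precisely the argument the paper is gesturing at with that remark, and it is a sound strategy. The bridge identity you wrote down is exactly what the proof of Theorem~\ref{kcf} establishes (after restoring the perimeter factors in the definition of $W_{g,n}$), and the degree count $6g-3+3n$ is correct. The asymptotic evaluation of Mirzakhani's kernels via $\tfrac{1}{N}\log(1+e^{Nt/2})\to\tfrac12\max(t,0)$ is the right mechanism and does indeed produce the piecewise linear kernels in the statement. The dominated-convergence justification is also the right shape: polynomial growth of the $V_{g,n}$ in the integration variables against exponential decay of the kernels outside compact sets gives an $N$-uniform integrable majorant.

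Two small points worth tightening if you write this up in full. First, the piecewise form of the single-integral kernel as stated in the theorem tacitly assumes $L_0\geq L_k$; when you do the case analysis on the limit of $\mathcal{R}$ you should either note this or observe that both sides are polynomial in the $L_i^2$ so that verifying the identity on an open cone suffices. Second, ``multiplying through by $L_0 L_1\cdots L_n$'' in Mirzakhani's recursion is not literally what one does: her identity is already of the form $L_0\,V_{g,n+1}=\cdots$, and the remaining perimeter factors must be inserted on the right-hand side variable by variable (with the factor $L_k$ going onto the $k$-th summand of the first sum, and the factors $x,y$ already present in her integrand playing the role of the new perimeters). This is exactly the bookkeeping that makes the $W$'s appear, but it deserves one explicit line.
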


As expected, this equation bears a striking resemblance to Mirzakhani's recursive formula for Weil--Petersson volumes \cite{mir1}. It can be used to compute all of the Kontsevich volumes from the base cases
\[
W_{0,3}(L_1, L_2, L_3) = L_1L_2L_3 \qquad \text{and} \qquad W_{1,1}(L_1) = \frac{1}{48}L_1^3.
\]
Furthermore, if one considers the differential version of this formula, then the result is precisely the Virasoro constraint for the Witten--Kontsevich theorem. This provides yet another path to the Witten--Kontsevich theorem. Finally, we remark that the Laplace transform of this recursive formula is the topological recursion of Eynard and Orantin \cite{eyn-ora} applied to the spectral curve $x = \frac{1}{2}y^2$.

\subsection{Combinatorial cycles}

Witten introduced cycles in the combinatorial moduli space in the following way. Consider the closure of the subset of ${\mathcal MRG}_{g,n}(\LL)$ consisting of metric ribbon graphs with one vertex of degree $2k+3$. This defines a homology class $\overline{W}_k(\LL) \in H_{2k}({\mathcal MRG}_{g,n}(\LL); \mathbb{Q})$ which in turn defines a cohomology class $W_k \in H^{2k}(\M_{g,n}; \mathbb{Q})$ via Poincar\'{e} duality. In particular, this cohomology class is independent of the initial choice of $\LL$. Subsequently, Kontsevich introduced more general cycles $\overline{W}_{\mathbf{m}}(\LL) \in H_*({\mathcal MRG}_{g,n}(\LL); \mathbb{Q})$, where $\mathbf{m} = (m_0, m_1, m_2, \ldots)$ is a sequence of non-negative integers. These arise by taking the closure of the subset of ${\mathcal MRG}_{g,n}(\LL)$ consisting of metric ribbon graphs with $m_k$ vertices of degree $2k+3$ for each $k$. These are often referred to as combinatorial cycles and they define cohomology classes $W_{\mathbf{m}} \in H^*(\M_{g,n}; \mathbb{Q})$ via Poincar\'{e} duality.

As above, we would like to perform a volume calculation with respect to the asymptotic Weil--Petersson form, but now over a combinatorial cycle rather than the whole combinatorial moduli space. For example, one can integrate over the cycle $\overline{W}_{\mathbf{m}}(N\x) \subseteq {\mathcal MRG}_{g,n}(N\x)$ of codimension $2d$ and obtain
\[
\lim_{N \to \infty} \frac{1}{N^{6g-6+2n-2d}} \int_{\overline{W}_{\mathbf{m}}(N\x)} \frac{\omega^{3g-3+n-d}}{(3g-3+n-d)!}.
\]
By Proposition~\ref{mirzakhani}, this expression is equal to
\[
\sum_{|\text{\boldmath$\alpha$}| = 3g-3+n-d} \int_{\M_{g,n}} W_\mathbf{m} \psi_1^{\alpha_1} \psi_2^{\alpha_2} \cdots \psi_n^{\alpha_n} \frac{x_1^{2\alpha_1} x_2^{2\alpha_2} \cdots x_n^{2\alpha_n}}{2^{3g-3+n-d} \alpha_1! \alpha_2! \cdots \alpha_n!}.
\]

On the other hand, one may compute the volume explicitly by calculating the asymptotic Weil--Petersson form on the combinatorial cycle. One way to do this is to choose $6g-6+2n$ geodesic multicurves whose lengths locally parametrise the moduli space. By Lemma~\ref{length-lemma}, these lengths converge to a linear combination of the edge lengths in the metric ribbon graph in the $N \to \infty$ limit. In the trivalent case, we observed earlier that all angles between closed geodesics converge to 0 or $\pi$. In the case of higher degree vertices, the limiting angles may still be computed, using the observation that the vertex polygons converge to regular ideal polygons. Armed with this information, one may then invoke Proposition~\ref{wolpert} to write down an explicit formula for the asymptotic Weil--Petersson form on the combinatorial cycle. Following the proof of Kontsevich's combinatorial formula, one then expects the volume calculation to split naturally as a sum over the ribbon graphs of type $(g,n)$ with $m_k$ vertices of degree $2k+3$ for each $k$. Equating these two volume calculations with respect to the asymptotic Weil--Petersson form over the Witten cycle then yields an identity which relates intersection numbers on $\M_{g,n}$ to a certain weighted enumeration of ribbon graphs. In fact, this provides a method for computing intersection numbers of the form
\[
\int_{\M_{g,n}} W \psi_1^{\alpha_1} \psi_2^{\alpha_2} \cdots \psi_n^{\alpha_n},
\]
where $W \in H^*(\M_{g,n}; \mathbb{Q})$ is the Poincar\'{e} dual to a combinatorial cycle.

\begin{example} \label{combinatorial-cycle}
Consider the Witten cycle $\overline{W}_1(Nx_1, Nx_2) \subseteq {\mathcal MRG}_{1,2}(Nx_1, Nx_2)$ defined by the set of metric ribbon graphs with at least one vertex of degree at least five. There are eight ribbon graphs of type $(1,2)$ with one vertex of degree five and one vertex of degree three --- two for each of the diagrams below corresponding to the two ways to label the faces. Note that each of these ribbon graphs has trivial automorphism group.

\begin{center}
\includegraphics{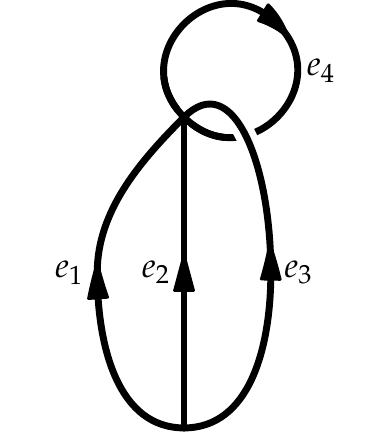}
\includegraphics{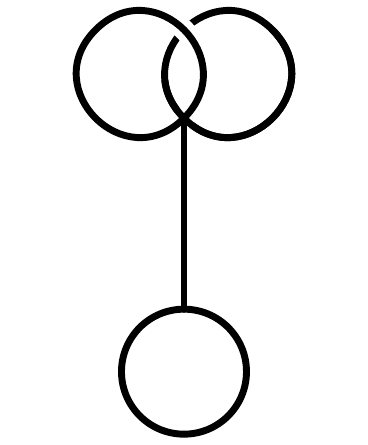}
\includegraphics{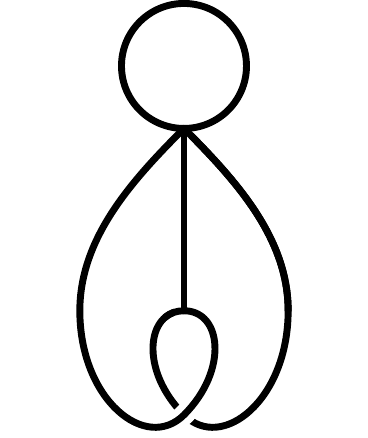}
\includegraphics{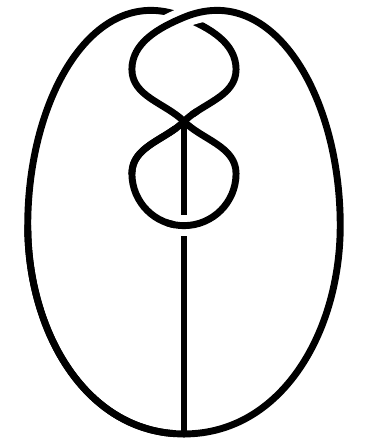}
\end{center}

We are interested in calculating the Laplace transform of the volume
\[
{\mathcal L} \left\{ \lim_{N \to \infty} \frac{1}{N^2} \int_{\overline{W}_1(Nx_1, Nx_2)} \omega \right\} = {\mathcal L} \left\{\int_{\overline{W}_1(x_1, x_2)} \Omega \right\},
\]
where $\Omega$ represents the asymptotic Weil--Petersson form over the Witten cycle $\overline{W}_1(x_1, x_2)$. Each of the eight metric ribbon graphs defines a top-dimensional cell in $\overline{W}_1(x_1, x_2)$, so we obtain a contribution to the volume from each. For example, let us explicitly determine the contribution from the leftmost ribbon graph $\Gamma$ above, where the face with two edges along its boundary is labelled 1 while the face with six edges along its boundary is labelled 2. In order to write down the asymptotic Weil--Petersson form, we look for four simple closed multicurves on $\Gamma$ whose geodesic representatives have lengths which locally parametrise Teichm\"{u}ller space. For example, one may choose the following, which have been described as a cyclic sequence of oriented edges.
\[
\begin{array}{rcl}
C_1 &=& [e_2, e_4, \overline{e}_2, e_3, \overline{e}_4, \overline{e}_3] \\
C_2 &=& [e_1, e_4, \overline{e}_1, e_3, \overline{e}_4, \overline{e}_3] \\
C_3 &=& [e_1, e_4, \overline{e}_2] \cup [e_4] \\
C_4 &=& [e_1, e_4, \overline{e}_2, e_3, \overline{e}_4, \overline{e}_1, e_2, \overline{e}_3]
\end{array}
\]

Their normalised lengths $\widehat{\ell}_k = \frac{\ell_k}{N}$ satisfy the following equations.
\[
\begin{array}{rcl}
\widehat{\ell}_1 &=& 2e_2 + 2e_3 + 2e_4 = x_1 + x_2 - 2e_1 \\
\widehat{\ell}_2 &=& 2e_1 + 2e_3 + 2e_4 = x_1 + x_2 - 2e_2 \\
\widehat{\ell}_3 &=& e_1 + e_2 + 2e_4 = x_2 - 2e_3 \\
\widehat{\ell}_4 &=& 2e_1 + 2e_2 + 2e_3 + 2e_4 = x_1 + x_2
\end{array}
\qquad \Rightarrow \qquad
\begin{array}{rcl}
d\widehat{\ell}_1 &=& -2de_1 \\
d\widehat{\ell}_2 &=& -2de_2 \\
d\widehat{\ell}_3 &=& -2de_3 \\
d\widehat{\ell}_4 &=& 0
\end{array}
\]

In order to use Wolpert's expression for the Weil--Petersson form, it is also necessary to calculate the angles between the four multicurves in the $N \to \infty$ limit. We do this using the observation that the vertex polygons converge to regular ideal polygons. For example, when two curves intersect at a degree five vertex of $\Gamma$, the angle between the corresponding curves in $S(N\Gamma)$ converges to the angle $\theta$ indicated in the diagram below, where $P_1P_2P_3P_4P_5$ is an ideal regular pentagon. Using standard hyperbolic trigonometry, one can calculate that $\cos \theta = \sqrt{5} - 2$.

\begin{center}
\includegraphics{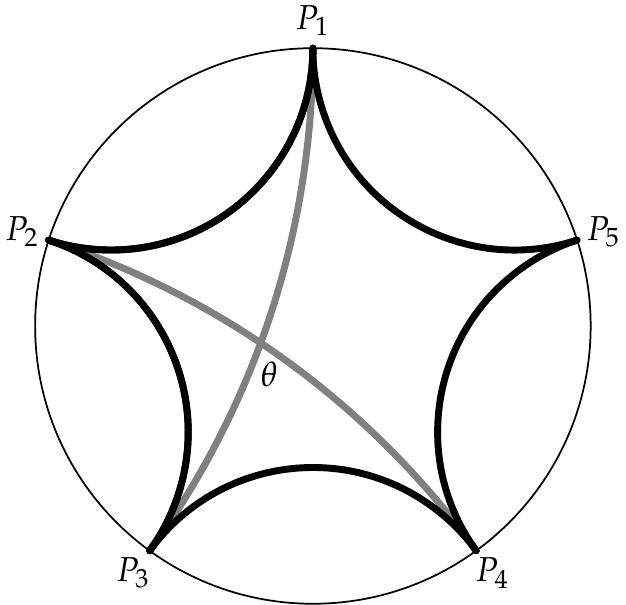}
\end{center}

We may now invoke Proposition~\ref{wolpert} which requires us to calculate the following two matrices.
\[
X = \left[ \begin{array}{cccc} 0 & \sqrt{5}-1 & -2 & -2 \\
1-\sqrt{5} & 0 & 2 & \sqrt{5}-1 \\
2 & -2 & 0 & 1-\sqrt{5} \\
2 & 1-\sqrt{5} & \sqrt{5}-1 & 0 \end{array} \right]
\;\; \Rightarrow \;\;
X^{-1} = \frac{1}{8} \left[ \begin{array}{cccc} 0 & -1-\sqrt{5} & -1-\sqrt{5} & 3+\sqrt{5} \\
1+\sqrt{5} & 0 & -3-\sqrt{5} & 3+\sqrt{5} \\
1+\sqrt{5} & 3+\sqrt{5} & 0 & 1+\sqrt{5} \\
-3-\sqrt{5} & 3-\sqrt{5} & -1-\sqrt{5} & 0 \end{array} \right]
\]

The asymptotic Weil--Petersson form over the Witten cycle $\overline{W}_1(x_1, x_2)$ can now be expressed as follows.
\[
\Omega = - \sum_{i < j}~[X^{-1}]_{ij}~d\widehat{\ell}_i \wedge d\widehat{\ell}_j = \frac{1+\sqrt{5}}{2} de_1 \wedge de_2 + \frac{1+\sqrt{5}}{2} de_1 \wedge de_3 + \frac{3+\sqrt{5}}{2} de_2 \wedge de_3 = de_2 \wedge de_3
\]
Here, we have used the fact that $de_2 = -de_1$, since $e_1 + e_2 = x_1$ is a constant.

After integrating over the convex polytope defined by $\Gamma$, one obtains
\[
{\mathcal L} \left\{ \int_{A_\Gamma \mathbf{e} = \x} de_2 \wedge de_3 \right\} = \frac{1}{2(s_1+s_2)^2 s_2^2}.
\]
Here, the matrix $A_\Gamma$ is the adjacency between faces and edges in the ribbon graph $\Gamma$. Furthermore, $s_1$ and $s_2$ are the Laplace transform variables of $x_1$ and $x_2$, respectively. For each of the eight ribbon graphs above, one obtains a similar contribution to the Laplace transform of the volume. The sum of the eight contributions simplifies rather remarkably in the following way.
\[
\begin{array}{cccccccc}
 & \frac{1}{2(s_1+s_2)^2 s_2^2} & + & \frac{1}{4(s_1+s_2) s_2^3} & + & \frac{1}{4(s_1+s_2) s_2^3} & + & \frac{1}{(s_1+s_2)^3 s_2} \\
+ & \frac{1}{2(s_1+s_2)^2 s_1^2} & + & \frac{1}{4(s_1+s_2) s_1^3} & + & \frac{1}{4(s_1+s_2) s_1^3} & + & \frac{1}{(s_1+s_2)^3 s_1}
\end{array}
= \frac{1}{2s_1s_2^3} + \frac{1}{2s_1^3s_2}
\]

Comparing this calculation to
\[
\lim_{N \to \infty} \frac{1}{N^2} \int_{W_1(N\x)} \omega = \frac{x_1^2}{2} \int_{\M_{1,2}} W_1 \psi_1 + \frac{x_2^2}{2} \int_{\M_{1,2}} W_1 \psi_2,
\]
one obtains
\[
\int_{\M_{1,2}} W_1 \psi_1 = \int_{\M_{1,2}} W_1 \psi_2 = 1.
\]
This result is easily verified by using $W_1 = 12 \kappa_1$, a fact first proved by Penner \cite{pen}.
\end{example}

\bibliographystyle{hacm}
\bibliography{asymptotic-WP}

\end{document}